\title{The Connes--Kirchberg Problem and infinite-dimensional phenomena in quantum information theory}
\author{Magdalena Musat \thanks{This work was supported by a research grant from the Danish Council for Independent Research, Natural Sciences.}}
\newcommand{\Id}{\mathrm{id}}
\newcommand{\Tr}{\mathrm{Tr}}
\newcommand{\tr}{\mathrm{tr}}
\newcommand{\fin}{\mathrm{fin}}
\newcommand{\mat}{\mathrm{matr}}
\newcommand{\UCPT}{\mathrm{UCPT}}
\newcommand{\Aut}{\mathrm{Aut}}
\newcommand{\FM}{\mathcal{FM}}
\newcommand{\cM}{{\mathcal M}}
\newcommand{\ep}{{\varepsilon}}
\newcommand{\cN}{{\mathcal N}} 
\newcommand{\cA}{{\mathcal A}}
\newcommand{\cG}{{\mathcal G}}
\newcommand{\cD}{{\mathcal D}}
\newcommand{\cU}{{\mathcal U}}
\newcommand{\cB}{{\mathcal B}}
\newcommand{\cR}{{\mathcal R}}
\newcommand{\C}{{\mathbb C}}
\newcommand{\Z}{{\mathbb Z}}
\newcommand{\N}{{\mathbb N}}
\newcommand{\Q}{{\mathbb Q}}
\newcommand{\R}{{\mathbb R}}
\newcommand{\Cs}{{$C^*$-al\-ge\-bra}}
\newcommand{\conv}{{\mathrm{conv}}}
\newcommand{\sh}{{$^*$-ho\-mo\-mor\-phism}}
\theoremstyle{plain}
\newtheorem{theorem}[subsection]{Theorem}
\newtheorem{corollary}[subsection]{Corollary}
\newtheorem{proposition}[subsection]{Proposition}
\theoremstyle{definition}
\newtheorem{definition}[subsection]{Definition}
\newtheorem{example}[subsection]{Example}
\newtheorem{conjecture}[subsection]{Conjecture}
\newtheorem{remark}[subsection]{Remark}
\date{}
\begin{document}

\maketitle 

\vspace{-.5cm}

\begin{center} \emph{In memory of Eberhard Kirchberg}
\end{center}

\begin{abstract}
We give an overview of results tying together a circle of problems connected to the Connes Embedding Problem, Kirchberg's reformulations thereof, Tsirelson's conjecture and its relation to quantum information theory, and a class of quantum channels, called factorizable, introduced by Anantharaman-Delaroche. While parts of the article are more expository, there are new results, including obstructions for channels to being $k$-noisy (admitting a factorization through a full matrix algebra).
\end{abstract}

\section{Introduction} \label{sec:intro}

\noindent The Connes Embedding Problem (CEP), originating in Connes' famous classification paper, \cite{Con:class}, from 1976, is likely the most influential question in operator algebras over the last 50 years, which has resulted in a wealth of interconnections between different fields of mathematics, including, besides operator algebras, quantum information theory, geometric group theory, and lately also complexity theory. Kirchberg's deep recasting of the CEP in terms of uniqueness of the $C^*$-tensor norm on the algebraic tensor product of the full group \Cs{} of the free group with itself, correlation matrices of unitaries in tracial von Neumann algebras, and his famous QWEP conjecture, all from his 1993 Inventiones paper, \cite{Kir:CEP}, are milestones. Kirchberg's results became the bridge between the CEP and Tsirelson's conjecture in quantum information theory. It was via Tsirelson's conjecture that the Connes Embedding Problem recently has found its negative solution in the paper MIP$^*$=RE, \cite{JNVWY:MIP*=RE}, by Ji--Natarajan--Vidick--Wright--Yuen, using complexity theory.

Anantharaman-Delaroche introduced in \cite{A-D:factorizable} a class of unital completely  positive maps between matrix algebras (or more generally between von Neumann algebras) that admit a factorization through a von Neumann algebra with a distinguished normal faithful state, that often is taken to be tracial. These maps are called {\em factorizable}. Although originally aimed at studying ergodic actions of free groups on noncommutative spaces in the more general set-up, factorizable channels between matrix algebras have found important applications within quantum information theory. They were in \cite{HaaMusat:CMP-2011} used to find counterexamples to the asymptotic Quantum Birkhoff Conjecture. From the point of view of this paper, factorizable maps provide 
 interesting reformulations of the CEP in terms of their infinite-dimensional behavior, and Kirchberg's correlation matrices of unitaries in tracial von Neumann algebras can be used to understand factorizable Schur multipliers.

We shall in this paper give an overview of results tying together these diverse topics, and we also include some so far unpublished results about factorizable maps, including Proposition~\ref{prop:k-noisy} about factorizable channels (in fact, mixtures of unitaries) which are not $k$-noisy (do not factor via a full matrix algebra), properties of factorizable channels that factor through matrix algebras versus those that factor through finite-dimensional von Neumann algebras,  Proposition~\ref{prop:FM}, and some new examples showing that the ancilla of a factorizable channel is far from unique, Example~\ref{ex:depolarizing}. We also provide a self-contained proof of Tsirelson's theorem (in the finite- dimensional case), based on \Cs{} techniques. 

In Section~\ref{sec:CEP} we discuss the Connes Embedding Problem and Kirchberg's reformulation thereof in terms of correlation matrices of unitaries, considered by Dykema--Juschenko in \cite{DykJus:moments}. Section~\ref{sec:factorizable} is about factorizable quantum channels and contains an overview of some of the most important results about these, including results about factorizable channels that cannot be expressed with finite dimensional (or type I) ancillas, as well as some so far unpublished results mentioned above. Section~\ref{sec:Tsirelson} gives a short overview of quantum correlation matrices, Tsirelson's conjecture, Kirchberg's reformulations of the Connes Embedding Problem, and a short account on how this led to establishing the equivalence between the CEP and Tsirelson's conjecture. 

\section{The Connes Embedding Problem and correlation matrices of unitaries} \label{sec:CEP}

\noindent In his seminal classification paper from 1976, \cite{Con:class}, as a bi-product A. Connes shows that the II$_1$-factors associated with the free groups embed into the ultrapower $\cR^\omega$ of the hyperfinite type II$_1$-factor $\cR$. He remarks that ``apparently such an imbedding ought to exist for all II$_1$ factors because it does for the regular representation of free groups''. This remark is now known as the \emph{Connes Embedding Problem}, abbreviated CEP, and more precisely asks if every II$_1$-factor with separable predual embeds into $\cR^\omega$. This seemingly off-hand remark of A. Connes became arguably the most important problem in operator algebras, with numerous interconnections and applications in Neumann algebras, \Cs s, geometric group theory, quantum information theory and beyond.  In \cite[Lemma 5.17+Lemma 5.22]{Con:class}, Connes restates the embedding problem as follows:

\begin{proposition}[Connes] \label{prop:CEP} 
Let $\cN$ be a finite factor with separable predual. Then $\cN$ admits a normal embedding into $\cR^\omega$ if and only if for all integers $n,\ell \ge 0$, all unitaries $u_1, \dots, u_n$ in $\cN$, and all $\ep >0$, there exist $k \in \N$ and unitaries $v_1, \dots, v_n$ in $M_k(\C)$ such that
$$\big| \tau_\cN(u_{i_1}^{\nu_1} \cdots u_{i_r}^{\nu_r}) - \tr_k(v_{i_1}^{\nu_1} \cdots v_{i_r}^{\nu_r}) \big| < \ep,$$
for all $r \ge 1$, $i_1, \dots, i_r \in \{1, \dots, n\}$, and $\nu_1, \dots, \nu_r \in \Z$ with $\sum |\nu_j| \le \ell$. 
\end{proposition}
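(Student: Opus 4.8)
The plan is to prove both implications through the group-algebra picture, using that $\cR$ is hyperfinite, so that the ultrapower $\cR^\omega$ carries the faithful normal trace $\tau_\omega([x_m]_m) = \lim_{m \to \omega} \tau_\cR(x_m)$ and, coordinatewise, is approximated in $\|\cdot\|_2$ by matrix subalgebras of $\cR$. Throughout, for a $*$-monomial $w$ in $n$ letters, abbreviate $w(u) = w(u_1,\dots,u_n)$ and $w(v^{(m)}) = w(v^{(m)}_1,\dots,v^{(m)}_n)$.

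For the ``only if'' direction, suppose $\pi \colon \cN \hookrightarrow \cR^\omega$ is a normal embedding. As $\cN$ and $\cR^\omega$ are finite factors with unique faithful normal traces, $\pi$ is trace-preserving, so $\tau_\cN(w(u)) = \tau_\omega(\pi(w(u)))$ for every such $w$. Each $\pi(u_i)$ is a unitary in $\cR^\omega$ and lifts to a sequence $(v^{(m)}_i)_m$ of unitaries in $\cR$ (for instance, write $\pi(u_i)$ as the exponential of a self-adjoint element of $\cR^\omega$, lift that self-adjoint element to self-adjoint elements of $\cR$, and exponentiate), whence $\tau_\omega(\pi(w(u))) = \lim_{m \to \omega} \tau_\cR(w(v^{(m)}))$. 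Since $\cR$ is the $\|\cdot\|_2$-closure of an increasing union of matrix subalgebras, each $v^{(m)}_i$ is $\|\cdot\|_2$-close to an element of some $M_{k}(\C) \subseteq \cR$, which after a small perturbation is a unitary $\tilde v^{(m)}_i \in M_{k}(\C)$. Because there are only finitely many words of total degree $\le \ell$, and $*$-moments are $\|\cdot\|_2$-continuous on the unit ball, for every $m$ in a suitable member of $\omega$ the $*$-moments of $\tilde v^{(m)}_1, \dots, \tilde v^{(m)}_n$ of degree $\le \ell$ are within $\ep$ of those of $u_1, \dots, u_n$; choosing one such $m$ yields the required matricial unitaries.

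For the ``if'' direction, fix unitaries $u_1, u_2, \dots$ generating $\cN$ as a von Neumann algebra (possible since $\cN$ has separable predual) and fix a free ultrafilter $\omega$ on $\N$. For each $m$, apply the hypothesis with $n = \ell = m$ and $\ep = 1/m$ to $u_1, \dots, u_m$ to obtain $k_m \in \N$ and unitaries $v^{(m)}_1, \dots, v^{(m)}_m \in M_{k_m}(\C)$; view $M_{k_m}(\C) \subseteq \cR$ trace-preservingly, set $v^{(m)}_i := 1$ for $i > m$, and let $V_i := [v^{(m)}_i]_m$, a unitary in $\cR^\omega$. By construction, for every $*$-monomial $w$ in the generators one has $\tau_\omega(w(V)) = \lim_{m \to \omega} \tau_\cR(w(v^{(m)})) = \tau_\cN(w(u))$, since for $m$ large the indices and total degree occurring in $w$ are accommodated and the defining inequality forces the coordinatewise traces to within $1/m$ of $\tau_\cN(w(u))$. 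Hence the $*$-homomorphism from the group algebra $\C[F_\infty]$ sending the $i$-th free generator to $V_i$, and the one sending it to $u_i$, induce the same tracial state $\sigma$ on $\C[F_\infty]$; as $\sigma$ is faithful (both $\tau_\cN$ and $\tau_\omega$ are), the $L^2$-completion (GNS) construction shows that $w(u) \mapsto w(V)$ extends to a trace-preserving, hence normal, $*$-isomorphism of $\cN$ onto the von Neumann subalgebra of $\cR^\omega$ generated by $V_1, V_2, \dots$, which is the desired normal embedding.

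The step I expect to be most delicate is the passage from matching tracial states on $\C[F_\infty]$ to a genuine normal $*$-isomorphism of the generated von Neumann algebras, where faithfulness of the trace and the separable-predual hypothesis are essential; a secondary technical point is lifting unitaries from $\cR^\omega$ to sequences of unitaries in $\cR$. The remaining ingredients — hyperfiniteness of $\cR$ yielding matricial approximation, and $\|\cdot\|_2$-continuity of $*$-moments on the unit ball — are routine.
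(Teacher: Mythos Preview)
Your argument is correct and matches the paper's approach: for the ``only if'' direction the paper likewise lifts unitaries from $\cR^\omega$ to unitary sequences in $\cR$ and then approximates them by matrix unitaries via hyperfiniteness (a dense UHF subalgebra together with Kaplansky's theorem), while the ``if'' direction is not detailed in the paper---it is attributed to Connes' original argument---and your moment-matching ultraproduct construction followed by a GNS identification is the standard proof. One minor correction: the state $\sigma$ on $\C[F_\infty]$ need not be faithful (the map from the free generators into $\cU(\cN)$ is typically far from injective); what is actually needed, and what you correctly cite in parentheses, is the faithfulness of $\tau_\cN$ and $\tau_\omega$ on the respective generated von Neumann algebras, which guarantees that both coincide with the GNS von Neumann algebra of $\sigma$.
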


\noindent The conclusion of the proposition is also valid for finite factors that do not have separable predual, so we can rephrase CEP as the question if all tracial von Neumann algebras satisfy the conclusion of Proposition~\ref{prop:CEP}. This can, informally, be recast as the question if any finite factor \emph{approximately} embeds into a matrix algebra.

Connes actually only proved the ``if'' part of Proposition~\ref{prop:CEP} in \cite{Con:class}. The proof of the ``only if'' part consists of a number of approximation steps: Given unitaries $u_1, \dots, u_n$ in $\cN \subseteq \cR^\omega$, we first find unitaries $w_1, \dots, w_n$ in $\cR$ so that $\tau_\cR(w_{i_1}^{\nu_1} \cdots w_{i_r}^{\nu_r})$ is sufficiently close to $\tau_\cN(u_{i_1}^{\nu_1} \cdots u_{i_r}^{\nu_r})$ for all words of length at most $\ell$. The idea behind this step is that any unitary $u$ in $\cR^\omega$ lifts to a unitary $\{u_n\}_n$ in $\ell^\infty(\N,\cR)$, and $\tau_{\cR^\omega}(u) = \lim_\omega \tau_\cR(u_n)$ by the construction of ultrapowers. 
The hyperfinite II$_1$-factor $\cR$ contains a UHF-algebra $\cA$ as a strong operator dense sub-algebra, so by Kaplansky's theorem we can approximate $w_1, \dots, w_n$ in the strong operator topology (and hence with respect to the tracial 2-norm)   with unitaries $z_1, \dots, z_n$ in $\cA$. Finally, the unitaries $z_1, \dots, z_n$ can be approximated in norm by unitaries $v_1, \dots, v_n$ residing in a matrix algebra contained in $\cA$. Making sure that each approximation is fine enough, we get the desired estimates. 

In \cite{Kir:CEP}, E. Kirchberg reformulated CEP in several ways, importantly connecting it to Tsirelson's conjecture, which in turn led to its negative solution by Ji, Natarajan, Vidick, Wright and Yuen in \cite{JNVWY:MIP*=RE}, see more in Section~\ref{sec:Tsirelson} below. One of Kirchberg's deep observations in \cite{Kir:CEP} is that one only needs to consider second order moments in Proposition~\ref{prop:CEP} above:

\begin{theorem}[Kirchberg] \label{thm:Kir1} Let $\cN$ be a finite factor with separable predual. It follows that $\cN$ embeds into $\cR^\omega$ if and only if for each integer $n \ge 2$, all unitaries $u_1, \dots, u_n$ in $\cN$, and all $\ep >0$ there exist $k \in \N$ and unitaries $v_1, \dots, v_n$ in $M_k(\C)$ such that
$$\big| \tau_\cN(u_iu_j^*) - \tr_k(v_iv_j^*) \big| < \ep,$$
for all $i,j = 1, \dots, n$. 
\end{theorem}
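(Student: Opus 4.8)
The plan is to treat the two implications separately. The ``only if'' implication is essentially free from Proposition~\ref{prop:CEP}: assuming $\cN\hookrightarrow\cR^\omega$, given unitaries $u_1,\dots,u_n\in\cN$ and $\ep>0$, apply Proposition~\ref{prop:CEP} with $\ell=2$ to obtain $k\in\N$ and matrix unitaries $v_1,\dots,v_n\in M_k(\C)$ matching all moments of total degree $\le2$; since the word $u_iu_j^{*}=u_i^{1}u_j^{-1}$ has degree $2$, one gets $|\tau_\cN(u_iu_j^{*})-\tr_k(v_iv_j^{*})|<\ep$ in particular. So the real content lies in the converse: one must show that matching \emph{only} second moments (over all tuples, for every $\ep$) already forces the full conclusion of Proposition~\ref{prop:CEP}.

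For that direction, fix $u_1,\dots,u_n\in\cN$, an integer $\ell\ge2$, and $\ep>0$; the goal is to produce unitaries $v_1,\dots,v_n\in M_{k'}(\C)$ with $|\tau_\cN(u_{i_1}^{\nu_1}\cdots u_{i_r}^{\nu_r})-\tr_{k'}(v_{i_1}^{\nu_1}\cdots v_{i_r}^{\nu_r})|<\ep$ for all words of degree $\le\ell$, after which Proposition~\ref{prop:CEP} finishes. The device is an \emph{amplification trick}: the degree-$\le\ell$ moments of $(u_1,\dots,u_n)$ can be rewritten as \emph{second} moments of a larger finite tuple of unitaries living in $M_d(\C)\otimes\cN$ for a suitable $d=d(n,\ell)$. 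In $M_d(\C)\otimes\cN$ one has the Weyl unitaries $X,Z$ generating $M_d(\C)\otimes1$, the ``diagonal'' unitaries $\hat u_i=e_{11}\otimes u_i+\sum_{j\ge2}e_{jj}\otimes1$ (with enough conjugates by powers of $X$, and the symmetries $2(e_{jj}\otimes1)-1$), and, for each word $w$ of degree $m\le\ell$, a weighted cyclic shift $U_w=\sum_je_{j+1,j}\otimes b_j^{(w)}$ whose entries spell out $w$ on a block of length $m$ and are $1$ elsewhere, so that $(U_w^{m})_{j+m,j}$ is literally the word $w$. Using cyclicity of the trace and pairing the powers $U_w^{\pm a}$ against the permutation unitaries and symmetries, each $\tau_\cN(w)$ equals, up to the factor $1/d$ and an elementary known correction, a quantity $\tau(PQ^{*})$ with $P,Q$ among (or short products of) these unitaries; enlarging the list by all products of at most two of its members and closing under adjoints and the scalars $e^{2\pi ik/d}$ keeps it finite.

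One then applies the second-order hypothesis to this amplified tuple and ``undoes'' the amplification on the matrix side. (A preliminary point: the hypothesis is stated for unitaries in $\cN$, so the second-order property must first be transferred to $M_d(\C)\otimes\cN$; this is immediate when $\cN\cong M_d(\C)\otimes\cN$, e.g.\ when $\cN$ is McDuff, the matrix-algebra case being trivial, and in general needs a short argument.) Granting this, one obtains $k\in\N$ and matrix unitaries $V_g\in M_k(\C)$, indexed by the list, with $|\tau(gh^{*})-\tr_k(V_gV_h^{*})|<\delta$ for a parameter $\delta$; after normalizing $V_1=I$ and making the harmless adjustments $V_{g^{*}}=V_g^{*}$, $V_{\lambda g}=\lambda V_g$, one invokes rigidity of finite-dimensional relations. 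Since $X,Z$ satisfy the Heisenberg relations $X^d=Z^d=1$, $ZX=e^{2\pi i/d}XZ$ \emph{exactly}, stability of these relations in the tracial $2$-norm perturbs $\{V_{X^aZ^b}\}$, at a cost $o_\delta(1)$, to an honest copy of the Weyl unitaries in $M_d(\C)\otimes M_{k'}(\C)\cong M_k(\C)$ (a small remainder absorbed). As $\hat u_i$ commutes with $Z$, the same perturbation makes $V_{\hat u_i}\approx\sum_je_{jj}\otimes\alpha_{ij}$ block-diagonal; pairing against the symmetries and against $V_1=I$ pins $\tr_{k'}(\alpha_{i1})\approx\tau_\cN(u_i)$ and $\alpha_{ij}\approx I$ for $j\ge2$, so one sets $v_i:=\alpha_{i1}$. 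Running the same analysis through for the shifts $U_w$ and their powers — using that $U_w^{m}$ is supported on a single off-diagonal of $M_d(\C)$, hence becomes, after the perturbation, an honest block-weighted-shift whose blocks are the corresponding products of the $\alpha_{i\cdot}$'s — one reads off $\tr_{k'}(v_{i_1}^{\nu_1}\cdots v_{i_r}^{\nu_r})\approx\tau_\cN(u_{i_1}^{\nu_1}\cdots u_{i_r}^{\nu_r})$ uniformly over words of degree $\le\ell$, with error vanishing as $\delta\to0$; choosing $\delta$ small enough yields the embedding via Proposition~\ref{prop:CEP}.

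The hard part is exactly this ``undoing'' step, and it is where Kirchberg's real insight sits: second-moment data alone does \emph{not} force $u_i\mapsto v_i$ to be even approximately multiplicative (a correlation matrix can always be realized by structurally unrelated matrix unitaries), so the needed approximate multiplicativity must be extracted from the \emph{rigidity} of the finite-dimensional matrix subalgebra one has amplified over — one must show the matrix unitaries returned by the hypothesis can be brought, by a \emph{single} small common perturbation, simultaneously into the expected block form ($V_X,V_Z$ generating an honest $M_d(\C)$; the $V_{\hat u_i}$ block-diagonal for it; the $V_{U_w^{m}}$ honest block-shifts with the expected blocks), and one must propagate the resulting $2$-norm errors controllably through products of up to $\ell$ of these structured unitaries. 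Quantitative stability of the defining relations of finite groups (or of systems of matrix units) in the normalized $2$-norm is the technical engine. I expect the bookkeeping of these compounding perturbations, together with the preliminary reduction making the hypothesis available in the amplified algebra, to be the main obstacles; alternatively, one can bypass the direct argument by passing through Kirchberg's equivalent reformulations of the CEP (QWEP, or uniqueness of the $C^{*}$-norm on $C^{*}(F_\infty)\otimes C^{*}(F_\infty)$), of which the second-order correlation statement is a repackaging.
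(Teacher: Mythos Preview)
Your proposal has a genuine gap at exactly the point you yourself flag as ``the hard part.'' The second-moment hypothesis hands you, for each finite list $L$ of unitaries in $M_d(\C)\otimes\cN$, matrix unitaries $\{V_g\}_{g\in L}$ whose \emph{Gram matrix} $\big[\tr_k(V_gV_h^*)\big]$ is close to $\big[\tau(gh^*)\big]$. But a Gram matrix only determines the tuple up to a Hilbert-space isometry of its linear span; it carries no multiplicative information. Concretely, even if $g_1g_2=g_3$ exactly in $L$, the only second moment this produces is the tautology $\tau(g_3g_3^*)=1$, which places no constraint whatsoever relating $V_{g_3}$ to $V_{g_1}V_{g_2}$. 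For your Weyl unitaries the situation is the same: the second moments of $\{X^aZ^b\}$ are simply $\delta_{a,a'}\delta_{b,b'}$, so the hypothesis only tells you that $\{V_{X^aZ^b}\}$ is an approximately orthonormal family of $d^2$ unitaries in $M_k(\C)$. Such families need not be close to any copy of the Heisenberg unitaries --- e.g.\ in $M_4(\C)$ the four diagonal sign matrices $I,\ \mathrm{diag}(1,1,-1,-1),\ \mathrm{diag}(1,-1,1,-1),\ \mathrm{diag}(1,-1,-1,1)$ are exactly orthonormal yet commute, so no perturbation makes them satisfy $ZX=\omega XZ$. In short, there are no approximate relations for a stability theorem to bite on; your ``undoing'' step cannot start.

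The paper's proof takes a completely different and much shorter route that sidesteps this obstruction entirely. Rather than trying to upgrade second moments to higher moments, one uses separability to fix a $\|\cdot\|_2$-dense sequence $\{u_j\}$ of unitaries in $\cN$ (with $u_1=1$), pushes the matrix approximants into $\cR^\omega$ along the ultrafilter to get unitaries $\{v_j\}$ with $\langle u_i,u_j\rangle_{\tau_\cN}=\langle v_i,v_j\rangle_{\tau_{\cR^\omega}}$ \emph{exactly}, and extends $u_j\mapsto v_j$ to a linear isometry $\Phi\colon\cN\to\cR^\omega$. The crucial point is now \emph{global}: because $\{u_j\}$ is $\|\cdot\|_2$-dense in the unitary group, $\Phi$ maps unitaries to unitaries; hence it is unital, contractive, and sends projections to projections, which forces $\Phi$ to be a Jordan $^*$-homomorphism. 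St\o rmer's decomposition then splits $\Phi$ into a $^*$-homomorphism plus a $^*$-anti-homomorphism on complementary central summands, and since $\cR^\omega$ is $^*$-anti-isomorphic to itself one flips the anti part to obtain an honest trace-preserving embedding $\cN\hookrightarrow\cR^\omega$. The multiplicativity you were trying to extract from finite-list rigidity comes instead, for free, from the unitary-preserving property of a global isometry.
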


\noindent This theorem is Proposition 4.6 in \cite{Kir:CEP}. For completeness of this exposition and to provide the reader with more details than given in \cite{Kir:CEP}, we include here a proof of this beautiful result. 

\begin{proof} The ``only if'' part follows from Proposition~\ref{prop:CEP}. The first step of the proof of the ``if'' part is to establish the existence of a  $\| \, \cdot \, \|_2$-isometric unital linear map $\Phi \colon \cN \to \cR^\omega$. By the separability assumption we can find a sequence $\{u_1, u_2, \dots \}$ of unitaries in $\cN$ that is $\| \, \cdot \, \|_2$-dense in the unitaries in  $\cN$ and with $u_1=1$. As all matrix algebras embed into $\cR$, we can for each $n \ge 1$ find unitaries $v_{n,1}, \dots, v_{n,n}$ in $\cR$ with $v_{n,1}=1_{\cR}$ and 
$$\big| \tau_\cN(u_iu_j^*) - \tau_\cR(v_{n,i}v_{n,j}^*)\big| < 1/n,$$
for all $i,j = 1, \dots, n$. (Note that we always can take $v_1=1_{\cR}$ in the theorem, upon replacing $v_i$ with $v_iv_1^*$.) Set $v_{n,j} = 1_{\cR}$, for $j >n$, and set $v_j = [\{v_{n,j}\}_n] \in \cR^\omega$, where $[ \, \cdot \, ]$ denotes the quotient mapping $\ell^\infty(\N,\cR) \to \cR^\omega$. It then follows that $v_1, v_2, \dots $ are unitaries in $\cR^\omega$ satisfying $v_1=1_{\cR}$ and $\langle u_i,u_j \rangle_{\tau_\cN} = \tau_{\cN}(u_iu_j^*) = \tau_{\cR^\omega}(v_iv_j^*) = \langle v_i,v_j \rangle_{\tau_{\cR^\omega}}$, for all $i,j \in \N$. 

A simple linear algebra argument shows that the map $u_i \mapsto v_i$ extends to a well-defined unital isometric linear map $\Phi_0 \colon \mathrm{span}\{u_1,u_2, \dots\} \to \cR^\omega$. 
Extend $\Phi_0$ isometrically to a Hilbert space isometry $\bar{\Phi} \colon L^2(\cN,\tau_\cN) \to L^2(\cR^\omega,\tau_{\cR^\omega})$, viewing $\cN$ as a dense subspace of $L^2(\cN,\tau_\cN)$. As the set $\{u_1, u_2, \dots \}$  is $\| \, \cdot \, \|_2$-dense in the unitaries in $\cN$, and since unitaries in a finite von Neumann algebra are closed with respect to $\| \, \cdot \, \|_2$, the isometry $\bar{\Phi}$ maps unitaries in $\cN$ into unitaries in $\cR^\omega$. The restriction, $\Phi$, of $\bar{\Phi}$ to $\cN$ is therefore the desired isometry from $\cN$ into $\cR^\omega$, which moreover maps unitaries in $\cN$ into unitaries in $\cR^\omega$. 

Each contraction in a finite factor is the mean of two unitaries, so we can further conclude that $\Phi$ is norm contractive (as it maps unitaries to unitaries), and hence positive, being unital. In particular, $\Phi$ maps symmetries to symmetries and therefore projections to projections. This entails that $\Phi$ is a Jordan $^*$-homomorphism. Indeed, $\Phi$ is a Jordan $^*$-homomorphism on its multiplicative domain $\{x \in \cN_{\mathrm{sa}} : \Phi(x)^2=\Phi(x^2)\}$,  which is a (norm-closed) Jordan algebra. By the observation above,  the multiplicative domain contains all projections in $\cN$, and each self-adjoint element of $\cN$ is a norm limit of linear combinations of projections in $\cN$, so the multiplicative domain must be  all of $\cN_\mathrm{sa}$. 

Finally, by a theorem of St\o rmer, \cite[Theorem 3.3]{Stormer:Jordan}, we can write $\Phi = \Phi_1 +\Phi_2$ as a sum of a $^*$-homomorphism $\Phi_1$ and  a $^*$-anti-homomorphism $\Phi_2$. Specifically, we can take  $\Phi_1(x) = \Phi(x)p$ and $\Phi_2(x) = \Phi(x)(1_{\cR^\omega}-p)$, for $x \in \cN$, for some central projection $p$ in the strong operator closure of the image of $\Phi$.  Since $\cR$ is $^*$-anti-isomorphic to itself, so is $\cR^\omega$ and its corners. It follows that $\Psi = \Phi_1 + \sigma \circ \Phi_2$ is a  unital $^*$-homomorphism  from $\cN$ into  $\cR^\omega$, where $\sigma$ is a  $^*$-anti-isomorphism of $(1_{\cR^\omega}-p)\cR^\omega(1_{\cR^\omega}-p)$. Finally, as $\Psi$ necessarily is trace preserving, it is automatically normal, so the image of $\Psi$ is a sub-von Neumann algebra of $\cR^\omega$ isomorphic to $\cN$. 
\end{proof}

\noindent This theorem of Kirchberg led Dykema and Juschenko, \cite{DykJus:moments}, to consider certain sets of matrices of correlations. In the following we redefine these sets slightly. Namely,  for $n,k \ge 2$, set
\begin{eqnarray*}
\cG_k(n) &\!=\!& \big\{\big[\tr_k(u_iu_j^*)\big] : u_1, \dots, u_n \in \cU(M_k(\C))\big\}, \; \; \;
\cG_\mat(n) \, = \, \textstyle{\bigcup_{k=1}^\infty} \cG_k(n)\\
\cG_\fin(n)  &\!=\!&  \big\{\big[\tau(u_iu_j^*)\big] : u_1, \dots, u_n \in \cU(\cN), \; \text{$(\cN,\tau)$ is a tracial} \\ && \text{finite-dimensional von Neumann algebra}\big\},\\
\cG_I(n)  &\!=\!&  \big\{\big[\tau(u_iu_j^*)\big] : u_1, \dots, u_n \in \cU(\cN), \; \text{$(\cN,\tau)$ is a tracial von Neumann} \\ && \text{algebra of type I}\big\},\\
\cG(n)  &\!=\!&  \big\{\big[\tau(u_iu_j^*)\big] : u_1, \dots, u_n \in \cU(\cN), \; \text{$(\cN,\tau)$ is any tracial  von}\\&& \text{Neumann algebra}\big\}.
\end{eqnarray*}
Here $\cU(\cN)$ denotes the group of unitary elements in $\cN$. In the definitions of $\cG_\fin(n)$, $\cG_I(n)$, and $\cG(n)$ we allow the tracial state $\tau$ to vary. By a tracial von Neumann algebra $(\cN,\tau)$ we mean a von Neumann algebra $\cN$ equipped with a normal faithful tracial state $\tau$. All tracial states on finite-dimensional von Neumann algebras are automatically normal. Any normal tracial state is faithful on a central summand and zero on its complement, so the faithfulness assumption is not essential.  In the definition of $\cG(n)$ we can replace the tracial von Neumann algebra with the more general situation of a unital \Cs{} equipped with a tracial state, since its enveloping von Neumann algebra via the GNS-representation with respect to the tracial state then is a tracial von Neumann algebra. Further, each von Neumann algebra with a normal faithful tracial state embeds in a trace preserving way into a II$_1$-factor, so it suffices to consider II$_1$-factors in the definition of $\cG(n)$. Using an ultrapower argument, one can see that $\cG(n)$ is closed, for all $n \ge 1$. (By closed, here we mean relatively to $M_n(\C)$. Since  $\cG(n)$ is also bounded, this is the same as saying that $\cG(n)$ is compact.)

\begin{remark} We summarize here some relations between the sets defined above. For $n\geq 2$, let $\Theta(n)$ be the set of $n \times n$ matrices of correlations (i.e., positive definite matrices with diagonal entries equal to 1). Part (vi) is Theorem~\ref{thm:Kir1}, while (iii) and (iv) were proved by Dykema and Juschenko in \cite{DykJus:moments}:
\begin{enumerate}
\item $\cG_k(n) \subseteq \cG_{k'}(n)$ if $k | k'$,
\item $\cG_\mat(n) \subseteq \cG_\fin(n) \subseteq \cG_I(n) \subseteq \cG(n) \subseteq \Theta(n)$,
\item $\cG_\mat(3) = \cG_\fin(3) = \cG_I(n) = \cG(3) = \Theta(3)$, 
\item $\cG(n) \subsetneq \Theta(n)$, for $n \ge 4$. 
\item $\overline{\cG_\mat(n)} = \overline{\cG_\fin(n)}$ and $\cG_\fin(n) = \conv \,  \cG_\mat(n)$,
\item $\overline{\cG_\mat(n)} = \cG(n)$, for all $n \ge 3$, if and only if CEP has a positive answer.
\end{enumerate}
\noindent
With the negative answer to CEP established in \cite{JNVWY:MIP*=RE}, we get  $\overline{\cG_\mat(n)} \subsetneq \cG(n)$, for some $n \ge 3$.
\end{remark}

\noindent
Consider also the sets $\cD_\mat(n) \subseteq \cD_\fin(n) \subseteq \cD_I(n) \subseteq \cD(n)$ of $n \times n$ matrices over $\R$, where we replace unitaries in the definition above with projections. The diagonal entries of any matrix in $\cD_\mat(n)$ are rational numbers. Hence $\cD_\mat(n)$ is neither closed nor convex, for $n \ge 2$. On the other hand, using direct sums, it is easy to see that the sets $\cG_\fin(n)$, $\cG_I(n)$, $\cG(n)$, $\cD_\fin(n)$, $\cD_I(n)$, and $\cD(n)$ are convex, for all $n \ge 1$. 

A more refined argument, using a theorem by Kruglyak, Rabanovich and Samoilenko, \cite{KRS:projections}, on existence of projections on a Hilbert space adding up to a scalar multiple of the identity, it was shown in \cite{MusRor:Infdim} that $\cD_\fin(n)$ is not closed, for $n \ge 5$; see also \cite{DykPauPra:non-closure}.

Using the trick in \cite[Proposition 3.2]{MusRor:Infdim} (arising from a discussion with W.\ Slofstra), there is a relation between the sets $\cD_*(n)$ and $\cG_*(2n+1)$, which leads to the following:

\begin{theorem}[Musat--R\o rdam] \label{thm:MR-notclosed} \mbox{}
\begin{enumerate}
\item $\cG_\mat(n)$ is not closed and not convex, for $n \ge 3$.
\item $\cG_\fin(n)$ and $\cG_I(n)$ are not closed, for $n \ge 11$.
\end{enumerate}
\end{theorem}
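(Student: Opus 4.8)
The plan is to reduce both parts of the theorem to the corresponding, already known, facts about the sets of correlation matrices of \emph{projections}, and then to propagate non-closure and non-convexity from one small index up to all larger ones by an elementary compactness argument.

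For the reduction I would use the trick of \cite{MusRor:Infdim} (arising from a discussion with W.\ Slofstra). For each $n$ it supplies a real-affine subspace $V_n$ of $M_{2n+1}(\C)$, cut out by finitely many linear equations on the matrix entries, together with an affine homeomorphism between $\cD_*(n)$ and the slice $\cG_*(2n+1)\cap V_n$, valid simultaneously for each of the classes $*\in\{\mat,\fin,I\}$ (and for the unrestricted class as well). Concretely, $n$ projections $p_1,\dots,p_n$ in a tracial von Neumann algebra $(\cN,\tau)$ get encoded by $2n+1$ unitaries built from the symmetries $1-2p_j$ and suitable auxiliary unitaries, all inside a von Neumann algebra of the same type as $\cN$ (a bounded amplification); conversely, any $2n+1$ unitaries whose correlation matrix lies in $V_n$ are shown to encode, through the same invariants, $n$ projections in an algebra of the same type. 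Granting this, the transfer of properties is automatic: each $\cG$-set is bounded, so if $\cG_*(2n+1)$ were closed it would be compact, hence so would the closed slice $\cG_*(2n+1)\cap V_n$, and hence so would its affine image $\cD_*(n)$; likewise convexity of $\cG_*(2n+1)$ would pass to the slice and then to $\cD_*(n)$.

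With the reduction in hand, the base cases are immediate. For part (i), the relevant invariants at $n=1$ form the set of traces of projections in matrix algebras, which is $\Q\cap[0,1]$, and this is neither closed nor convex; hence $\cG_\mat(3)$ is neither closed nor convex. For part (ii), the refined argument of \cite{MusRor:Infdim}, based on the Kruglyak--Rabanovich--Samoilenko theorem \cite{KRS:projections} on families of projections on a Hilbert space summing to a scalar multiple of the identity, shows that $\cD_\fin(n)$ and $\cD_I(n)$ are not closed for $n\ge 5$; taking $n=5$ the reduction gives that $\cG_\fin(11)$ and $\cG_I(11)$ are not closed. To reach all larger indices, observe that for $m\le n$ compression to the upper-left $m\times m$ corner is a continuous, affine, surjective map $\cG_*(n)\to\cG_*(m)$ --- surjective because a realization $u_1,\dots,u_m$ extends to $u_1,\dots,u_m,1_\cN,\dots,1_\cN$ --- so closedness of $\cG_*(n)$ (again via boundedness and compactness) would force closedness of $\cG_*(m)$, and convexity of $\cG_*(n)$ would force convexity of $\cG_*(m)$. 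Applying this with $m=3$ gives part (i) for all $n\ge 3$, and with $m=11$ gives part (ii) for all $n\ge 11$.

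I expect the reduction itself to be the main obstacle: building the Slofstra-type encoding of $n$ projections by $2n+1$ unitaries, and its inverse, so that it preserves the type of the ambient von Neumann algebra and therefore transfers \emph{both} non-closure \emph{and} non-convexity class by class. In particular, the type I part of (ii) is the delicate point --- one needs the Kruglyak--Rabanovich--Samoilenko non-closure witness to remain unrealizable by projections even in a \emph{type I} tracial von Neumann algebra, not merely in a finite-dimensional one. By comparison, the corner-map propagation and the identification of the $n=1$ invariant set with $\Q\cap[0,1]$ are routine.
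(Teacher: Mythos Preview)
Your proposal is correct and follows essentially the same approach as the one the paper cites from \cite{MusRor:Infdim}: the Slofstra-type encoding of $n$ projections by $2n+1$ unitaries (Proposition~3.2 there) identifies $\cD_*(n)$ with an affine slice of $\cG_*(2n+1)$, so that the known non-closure/non-convexity of $\cD_\mat(n)$ and the KRS-based non-closure of $\cD_\fin(n)$, $\cD_I(n)$ for $n\ge 5$ transfer to the $\cG$-sets, with the surjective corner-compression map propagating the conclusion to all larger indices. Your identification of the bijectivity of the encoding (in both directions, preserving the class of ancilla) as the main point to verify is accurate; that is precisely the content of \cite[Proposition~3.2]{MusRor:Infdim}.
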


\noindent Part (i) is proved in  \cite[Corollary 3.3]{MusRor:Infdim} and part (ii) in 
\cite[Theorem 3.6]{MusRor:Infdim}. This implies in particular that sets $\cG_k(n) \subseteq \cG_\mat(n)$ do not stabilize for large $k$, for each $n \ge 3$; and also that one cannot exhaust $\cG_\fin(n)$ with a single finite-dimensional \Cs{} $\cA$. 

Using a clever determinant trick and writing a scalar multiple of the unit of a von Neumann algebra as a product of symmetries, Mori improved in \cite{Mori:shape}  the bound on $n$ as follows:

\begin{theorem}[Mori] \label{thm:Mori} $\cG_\fin(n)$ and $\cG_I(n)$ are not closed, for $n \ge 8$.
\end{theorem}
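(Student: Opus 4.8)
The plan is to exhibit, for $n=8$, a correlation matrix $C$ which lies in $\overline{\cG_\fin(8)}$ but not in $\cG_\fin(8)$, and not in $\cG_I(8)$ either; the passage from $8$ to all $n\ge 8$ is then routine, by padding a witnessing tuple with copies of the unit (and the matrix $C$ with the corresponding rows and columns). Fix $\lambda\in\T$ that is not a root of unity but is a limit $\lambda=\lim_m\lambda_m$ of roots of unity. The configuration behind $C$ is a factorization of the scalar $\lambda\cdot 1$ into four symmetries inside a $\mathrm{II}_1$-factor: take a Haar unitary $u$ (e.g.\ the bilateral shift generating $L(\Z)$, which is unitarily equivalent to its adjoint, hence a product of two symmetries $u=s_1s_2$); then $\lambda u^{*}$ is again a Haar unitary, so $\lambda u^{*}=s_3s_4$ for symmetries $s_3,s_4$, whence $s_1s_2s_3s_4=u\cdot\lambda u^{*}=\lambda\cdot 1$. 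I would take the witnessing tuple to be the unit, the four symmetries $s_1,\dots,s_4$, and a minimal family of partial products of the $s_\ell$ chosen so that the total is $8$, and let $C=C_\lambda=[\tau(\,\cdot\;\cdot^{\,*})]$ be the corresponding matrix; since $s_1s_2s_3s_4$ is among the generators, $\lambda$ itself appears as one of its entries. Running the same construction with $\lambda_m$ in place of $\lambda$ and a unitary of finite order (e.g.\ $\diag(1,\zeta,\dots,\zeta^{q_m-1})\in M_{q_m}$, $\zeta=e^{2\pi i/q_m}$) in place of $u$ realizes the analogous configuration, and hence $C_{\lambda_m}$, inside a single matrix algebra; so $C_{\lambda_m}\in\cG_\mat(8)\subseteq\cG_\fin(8)$, and the finite configurations can be chosen so that $C_{\lambda_m}\to C_\lambda$, giving $C_\lambda\in\overline{\cG_\fin(8)}$.

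The ``determinant trick'' rules out $C_\lambda\in\cG_I(8)$, hence a fortiori $C_\lambda\in\cG_\fin(8)$. Suppose $C_\lambda=[\tau(v_iv_j^{\,*})]$ for unitaries $v_1,\dots,v_8$ in a tracial type I von Neumann algebra $(\cN,\tau)$; the existence of the trace forces $\cN=\bigoplus_\alpha L^{\infty}(X_\alpha)\,\bar\otimes\,M_{k_\alpha}$ with all $k_\alpha<\infty$, and we may assume $v_1=1$ (the matrix $[\tau(v_iv_j^{\,*})]$ is unchanged under replacing each $v_i$ by $v_iv_1^{\,*}$). On each summand the fibrewise determinant $\det\colon\cU\big(L^{\infty}(X_\alpha)\,\bar\otimes\,M_{k_\alpha}\big)\to\cU(L^{\infty}(X_\alpha))$ is a character. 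The key input (the rigidity discussed below) is that any such realization must consist of symmetries whose appropriate length-four product equals $\lambda\cdot 1_\cN$; applying the fibrewise determinant to this relation, and using that a product of symmetries has fibrewise determinant valued in $\{\pm1\}$ while $\det(\lambda\cdot 1_{M_{k_\alpha}})=\lambda^{k_\alpha}$, forces $\lambda^{k_\alpha}=\pm1$ as a constant function on each $X_\alpha$, so $\lambda^{2k_\alpha}=1$ and $\lambda$ is a root of unity --- contradicting the choice of $\lambda$. Hence $C_\lambda\notin\cG_I(8)\supseteq\cG_\fin(8)$, so neither $\cG_\fin(8)$ nor $\cG_I(8)$ is closed.

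The heart of the matter --- and the step I expect to be the main obstacle --- is the rigidity invoked above: one must show that the purely \emph{second-order} datum $C_\lambda$, a matrix of inner products $\langle v_i,v_j\rangle_\tau$, already pins down that the generators corresponding to the $s_\ell$ are symmetries and that the designated length-four word in them equals the scalar $\lambda\cdot 1$. The idea is that the degeneracies of $C_\lambda$ (the linear identities among the generators forced by $s_\ell^{2}=1$ and by the naming of the partial products, together with the identification of $s_1s_2s_3s_4$ with $\lambda\cdot 1$) leave the minimal realization essentially no freedom, so that $C_\lambda$ is an extreme point of the compact convex set $\cG(8)$ whose realization carries the stated relations; since $\cG_\fin(8)=\conv\,\cG_\mat(8)$ by the Remark following Theorem~\ref{thm:Kir1}, membership $C_\lambda\in\cG_\fin(8)$ would then force $C_\lambda\in\cG_\mat(8)$, i.e.\ realizability inside one matrix algebra, whereupon the determinant computation of the previous paragraph applies and produces the contradiction. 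Carrying out this rigidity argument with as few auxiliary partial-product generators as possible --- just enough to expose the relations $s_\ell^{2}=1$, to force the partial products to be what they are called, and to make $\lambda$ an entry of $C_\lambda$ --- is precisely what lowers the threshold to $n\ge 8$, improving the bound $n\ge 11$ of Theorem~\ref{thm:MR-notclosed}. The remaining ingredients (the root-of-unity approximation, the convergence $C_{\lambda_m}\to C_\lambda$, the fibrewise determinant computation, and the reduction $8\rightsquigarrow n$) are all routine once the rigidity is established.
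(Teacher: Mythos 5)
Your overall architecture is the same as Mori's, as the paper sketches it: realize a scalar $e^{2\pi i\theta}$ (not a root of unity) as a product of four symmetries in a tracial von Neumann algebra, get the witnessing correlation matrix into the closure, and rule out type I realizations by a determinant obstruction. But the step you yourself flag as ``the heart of the matter'' --- the rigidity claim that the purely second-order datum $C_\lambda=[\tau(v_iv_j^*)]$ forces, in \emph{every} tracial realization $v_1,\dots,v_8$, that the designated $v_i^*v_j$ are symmetries and that the prescribed length-four word equals $\lambda\cdot 1$ --- is precisely the nontrivial content of Mori's theorem, and you do not prove it. The mechanism you offer instead (that the degeneracies make $C_\lambda$ an extreme point of $\cG(8)$, so that $C_\lambda\in\cG_\fin(8)=\conv\,\cG_\mat(8)$ would force $C_\lambda\in\cG_\mat(8)$) is itself unestablished, and even if it were true it could not replace the rigidity: your exclusion of $\cG_I(8)$ via the fibrewise determinant explicitly needs the symmetry and product relations to hold in an \emph{arbitrary} type I realization, which extremality does not give. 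The way such relations are actually extracted from second-order moments is via faithfulness of the trace and equality in Cauchy--Schwarz (e.g.\ $|\tau(w)|=1$ for a unitary $w$ forces $w=\tau(w)1$, and suitably prescribed entries force $v_i^*v_j$ to be self-adjoint); Mori's contribution is to choose eight unitaries in $\cR$ whose $8\times 8$ moment matrix encodes exactly enough such constraints. Nothing in your write-up constructs or verifies such a matrix, so as it stands the proof has a genuine gap at its central step.

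Two smaller points. First, your symmetries $s_1,s_2$ with $s_1s_2=u$ cannot live in $L(\Z)$ itself (it is abelian, so products of its symmetries are $\pm1$-valued); you need the dihedral-type extension $L(\Z\rtimes\Z/2)$, and similarly for $s_3,s_4$, and one must then check that all four symmetries (and the auxiliary partial products) sit in a single tracial algebra with the required joint moments --- this is exactly the construction you have not specified. Second, your route to $C_\lambda\in\overline{\cG_\fin(8)}$ via finite models built from roots of unity $\lambda_m$ requires the convergence of all $64$ entries, which depends on the unspecified choice of generators; the paper's argument avoids this entirely by placing the witnessing unitaries in $\cR$, so that membership in $\overline{\cG_\mat(8)}\subseteq\overline{\cG_I(8)}$ follows at once from Theorem~\ref{thm:Kir1}.
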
 

\noindent The non-closure of $\cG_\fin(n)$, for $n \ge 8$, is precisely \cite[Theorem 2.3]{Mori:shape}, while the non-closure of $\cG_I(n)$ follows from the proof of that theorem. Indeed, for $n \ge 8$ and $\theta \in \R \setminus \Q$, Mori finds unitaries $u_1, \dots, u_8$ in $\cR$ such that if $(\cM,\tau)$ is a tracial von Neumann algebra with unitaries $v_1, \dots, v_8$ satisfying $[\tau_\cR(u_iu_j^*)] = [\tau(v_iv_j^*)]$, then $v_1^*v_2, v_2^*v_3, v_3^*v_4$ and $e^{-2\pi i\theta}v_1^*v_4$ are symmetries which satisfy
$$e^{2\pi i \theta} \,1_\cM = (v_1^*v_2)(v_2^*v_3) (v_3^*v_4)(e^{-2\pi i\theta}v_1^*v_4)^*.$$
If $\cM$ had a  representation on a finite-dimensional Hilbert space $H$, then $e^{2\pi i \theta} \,1_H$ would be a product of four symmetries, which is impossible (the determinant of a symmetry on a finite- dimensional Hilbert space is $\pm 1$). Hence $\cM$ has no finite-dimensional representations, and $\cM$ therefore is of type II$_1$. Thus $[\tau_\cR(u_iu_j^*)]$ does not belong to $\cG_I(n)$, but it does belong to the closure of $\cG_\mat(n)$ (by Theorem~\ref{thm:Kir1}) and hence to the closure of $\cG_I(n)$.

It follows from the above result of Dykema and Juschenko that $\cG_\fin(n)$ is closed, for $n \le 3$. It remains unknown if $\cG_\fin(n)$ is closed for $n=4,5,6,7$. 

\section{Factorizable quantum channels and their ancillas} \label{sec:factorizable}

\noindent Let $\tr_n$ and $\Tr_n$ denote the normalized and the standard trace, respectively, on the \Cs{} $M_n(\C)$ of complex $n \times n$ matrices, for $n \ge 2$.
A linear map $\Phi \colon M_n(\C) \to M_n(\C)$ is a \emph{quantum channel} if it is completely positive and trace preserving (i.e., $\tr_n \circ \Phi = \tr_n$). We denote by $\UCPT(n)$ the set of all unital quantum channels in dimension $n$. By Choi's celebrated theorem from 1973, \cite{Choi:CP}, a linear map 
$\Phi \colon M_n(\C) \to M_n(\C)$ is completely positive if and only if its Choi matrix $C_\Phi$ is positive, respectively, if and only if $\Phi(x) = \sum_{j=1}^d v_jxv_j^*$, for all $x \in M_n(\C)$, for some \emph{Kraus operators} $v_1, \dots, v_d \in M_n(\C)$. The Choi matrix is defined to be $C_\Phi = \sum_{i,j=1}^n \Phi(e_{ij}) \otimes e_{ij} \in M_n(\C) \otimes M_n(\C)$, where $\{e_{ij}\}$ are the standard matrix units for $M_n(\C)$. 
The Kraus operators can be  chosen to be linearly independent, in which case $d$ is equal to the \emph{Choi rank} of $\Phi$, which is defined to be the rank of $C_\Phi$.
Clearly, $\Phi$ is unital if and only if $\sum v_jv_j^* = 1_n$, and $\Phi$ is trace preserving if and only if $\sum  v_j^*v_j= 1_n$. 

Examples of unital quantum channels in dimension $n$ include automorphisms of $M_n(\C)$ (which all are inner), and the so-called Schur channels, which are Schur multipliers $\Sigma_b$ associated to a correlation matrix $b$ in $M_n(\C)$. The set $\UCPT(n)$  is closed and convex, so any convex combination of the above examples also provides a unital quantum channel. 

\begin{definition} \label{def:factorizable}
A linear map $\Phi \colon M_n(\C) \to M_n(\C)$ is called \emph{factorizable} if it is of the form
\begin{equation} \label{eq:1}
\Phi(x) = (\Id_n \otimes \tau)(u(x \otimes 1_\cA)u^*), \qquad x \in M_n(\C),
\end{equation}
where $(\cA,\tau)$ is a tracial von Neumann algebra, called \emph{ancilla}, and $u$ is a unitary in $M_n(\C) \otimes \cA$. 
\end{definition}

\noindent Factorizable maps have appeared in thermodynamics, where they are referred to as ``noisy operations'', see \cite{BZ:Geometry}. If $(\cA,\tau) = (M_k(\C),\tr_k)$, for some $k \ge 2$, then $\Phi$ is called \emph{$k$-noisy}. Note that factorizable maps always are unital quantum channels. The set of all factorizable maps in dimension $n$ is denoted by $\FM(n)$. 

By the original definition, due to Anantharaman-Delaroche, \cite{A-D:factorizable}, a linear map $\Phi \colon M_n(\C) \to M_n(\C)$ is factorizable if there exist a tracial von Neumann algebra $(\cA,\tau)$ and unital \sh s 
$\alpha, \beta \colon M_n(\C) \to M_n(\C) \otimes \cA$, so that $\Phi = \beta^* \circ \alpha$:
\begin{equation} \label{eq:A-D}
\begin{split}
\xymatrix@C-1.0pc{
 {{M}_n(\mathbb{C})}\ar@{->}^{\Phi}[rr]
 \ar@{->}_{\alpha}[dr]
 & & {M}_n(\mathbb{C}) \ar[dl]_\beta & & \vspace*{0.1cm}\\
& {{M}_n(\mathbb{C})\otimes \cA} 
\ar@/_/[ur]_-{\; \;  \beta^*}
}
\end{split}
\end{equation}
where  $\beta^*$ is the (Euclidean) adjoint of $\beta$, i.e., $\langle \beta(x), y \rangle_{\Tr_n \otimes \tau} = \langle x, \beta^*(y) \rangle_{\Tr_n}$, for all $x \in M_n(\C)$ and $y \in M_n(\C) \otimes \cA$. We can take $\alpha$ to be $\Id_n \otimes 1_\cA$. Using that $\alpha$ and $\beta$ automatically are unitarily equivalent (since $M_n(\C) \otimes \cA$ is a von Neumann algebra, see \cite[Lemma 2.1]{HaaMusat:CMP-2011}), it was shown in 
\cite[Theorem 2.2]{HaaMusat:CMP-2011} that the definition given in \eqref{eq:1} and the original definition above are equivalent in a way that preserves the ancilla $\cA$. In the situation of \eqref{eq:A-D} we sometimes say that $\Phi$ factors (or has a factorization) through $M_n(\C) \otimes \cA$.

One can explicitly recover $\Phi$ from $\alpha$ and $\beta$ via the formula:
\begin{equation} \label{eq:2}
\Phi(x) = \sum_{i,j=1}^n \langle \alpha(x), \beta(e_{ij}) \rangle_{\Tr_n \otimes \tau} \, e_{ij}, \qquad x \in M_n(\C).
\end{equation}

\begin{remark}[Stinespring representations] The expression for a factorizable map in \eqref{eq:1} resembles the Stinespring representation for completely positive maps  $\Phi \colon M_n(\C) \to M_n(\C)$. Indeed, by Stinespring's theorem,  $\Phi$ is completely positive and trace preserving (i.e., a quantum channel) if and only if there exist $r \ge 1$ and a unitary $u \in M_n(\C) \otimes M_r(\C)$ such that
$$\Phi(x) = (\Id_n \otimes \Tr_r)(u(x \otimes e_{11})u^*), \qquad x \in M_n(\C),$$
where again $e_{11}$ is the 1-dimensional projection with $1$ in the $(1,1)$th position;  and $\Phi$ is unital and completely positive  if and only if there exist $r \ge 1$, a unitary $u \in M_n(\C) \otimes M_r(\C)$  and a state $\rho$ (that can be taken to be pure) such that
$$\Phi(x) = (\Id_n \otimes \rho)(u(x \otimes 1_r)u^*), \qquad x \in M_n(\C).$$
In both cases one can take $r$ to be the Choi rank of $\Phi$ (which is at most $n^2$). 

The expression for factorizable maps in Definition~\ref{def:factorizable} unites the two expressions above. However, there are two remarkable differences: Not all UCPT maps are factorizable, and  there seems to be no obvious Stinespring type characterization of UCPT maps; and the ancilla $\cA$ for factorizable maps cannot be controlled. In fact, as we shall see, we will need infinite-dimensional, even non-type I, ancillas, for certain factorizable maps; and in presence of a negative answer to the CEP we will even need ancillas that do not embed into $\cR^\omega$!
\end{remark}

\begin{example} \mbox{}
\begin{enumerate}
\item All automorphisms of $M_n(\C)$ are factorizable. Automorphisms on $M_n(\C)$ are always inner, i.e., of the form $\Phi(x) = uxu^*$, for some unitary $u \in M_n(\C)$. One can therefore take the trivial ancilla $\cA = \C$ to express $\Phi$ as a factorizable map. 
\item A Schur channel $\Sigma_b$ on $M_n(\C)$, with $b \in M_n(\C)$, is factorizable if and only if $b \in \cG(n)$, in which case, if $b = [\tau(u_iu_j^*)]$, with $u_1, \dots, u_n$ unitaries in an ancillary tracial von Neumann algebra $(\cA,\tau)$, we can take $u = \sum_{j=1}^n e_{jj} \otimes u_j$ in $M_n(\C) \otimes \cA$, see 
\cite[Proposition 2.8]{HaaMusat:CMP-2011}.  In particular, $\Sigma_b$ is $k$-noisy if and only if $b \in \cG_k(n)$.
\end{enumerate}
\end{example}

\noindent
The set $\FM(n)$ of factorizable maps in dimension $n$ is closed (hence compact) and convex, and also closed under composition. To see the latter, if $\Phi$ and $\Psi$ are factorizable maps in dimension $n$ with representations $\Phi(x) = (\Id_n \otimes \tau_\cA)(u(x\otimes 1_\cA)u^*)$ and $\Psi(x) = (\Id_n \otimes \tau_\cB)(v(x\otimes 1_\cB)v^*)$, then 
$$(\Psi \circ \Phi)(x) = (\Id_n \otimes \tau_\cA \otimes \tau_\cB)(w(x \otimes 1_\cA \otimes 1_\cB)w^*), \quad x \in M_n(\C),$$
with $w = \widehat{v} \, (u \otimes 1_\cB)$, where $\widehat{x \otimes b}= x \otimes 1_\cA \otimes b$, for $x \in M_n(\C)$ and $b \in \cB$. In particular,
\begin{equation} \label{eq:chain}
\conv(\Aut(M_n(\C))) \subseteq \FM(n) \subseteq \UCPT(n), \qquad n \ge 2.
\end{equation}

Channels belonging to $\conv(\Aut(M_n(\C)))$ are often called \emph{mixtures of unitaries}. The inclusions in \eqref{eq:chain} above are equalities throughout when $n=2$, as shown by K\"ummerer, \cite{Kuem:2x2}. However, for $n \ge 3$,
$\conv(\Aut(M_n(\C))) \ne \UCPT(n)$, that is, the Quantum Birkhoff Conjecture (QBC) fails, for all $n \ge 3$. This was shown by K\"ummerer in \cite{Kuem:Hab}, for $n=3$, by K\"ummerer--Maassen, \cite{Kuem-Maassen}, for $n \ge 4$; see also Landau--Streater, \cite{Landau-Streater} for a simpler proof. 

We remark here that the Holevo--Werner channel $W_3^-$ in dimension $n=3$, belongs to $\UCPT(3)$ but not to 
$\conv(\Aut(M_n(\C)))$, thus witnessing that  $\conv(\Aut(M_3(\C))) \ne \UCPT(3)$. The ``negative'' Holevo-Werner channels are defined by $W_n^-(x) = (n-1)^{-1} (\Tr_n(x) 1_n - x^t)$, for $n \ge 2$, where $x^t$ is the transpose of $x \in M_n(\C)$. They are completely positive because their Choi matrix, which is  $(n-1)^{-1}(1_n \otimes 1_n -s_n)$, where $s_n \in M_n(\C) \otimes M_n(\C)$ is the ``flip-symmetry'', is positive. The Choi representation for $W_n^-$ is given by
$$W_n^-(x) =  \frac{1}{2n-2} \sum_{i,j=1}^n (e_{ij}-e_{ji})x (e_{ij}-e_{ji})^* = \frac{1}{n-1} \sum_{i < j}^n (e_{ij}-e_{ji})x (e_{ij}-e_{ji})^*.$$ 
In particular, $W_3^-(x) = v_1xv_1^*+v_2xv_2^*+v_3xv_3^*$, where
$$v_1 = \frac{1}{\sqrt{2}} \begin{pmatrix}0 & 1 & 0 \\ -1 & 0 & 0 \\ 0 & 0 & 0\end{pmatrix}, \quad 
v_1 = \frac{1}{\sqrt{2}} \begin{pmatrix}0 & 0 & 1 \\ 0 & 0 & 0 \\ -1 & 0 & 0\end{pmatrix}, \quad 
v_1 = \frac{1}{\sqrt{2}} \begin{pmatrix}0 & 0 & 0 \\ 0 & 0 & 1 \\ 0 & -1 & 0\end{pmatrix}.
$$ One can check that $\{v_iv_j^* : 1 \le i,j \le 3\}$ forms a linearly independent set; hence $W_3^-$ is an extreme point of $\UCPT(3)$, cf.\ \cite[Theorem 5]{Choi:CP}. If $W_3^-$ were to belong to 
$\conv(\Aut(M_3(\C)))$ it would be an extreme point here, hence an automorphism on $M_3(\C)$, which it clearly is not. 

It further follows from \cite[Corollary 2.3]{HaaMusat:CMP-2011} that $W_3^-$ is not factorizable, witnessing that the second inclusion in \eqref{eq:chain} is strict in general. Examples of non-factorizable channels in all other dimensions were further established in \cite{HaaMusat:CMP-2011}. It was also shown therein that each non-factorizable channel violates the \emph{Asymptotic Quantum Birkhoff Conjecture} formulated by Smolin--Verstraete--Winter, \cite{SmoVerWin:entagle}, as an attempted restoration of the QBC (in the asymptotic limit). Non-factorizable maps are directly  connected to the study of the convex structure of $\UCPT(n)$ and the related classes, $\mathrm{CPT}(n)$ and $\mathrm{UCP}(n)$, as every unital quantum channel which is an extreme point in either $\mathrm{CPT}(n)$ or $\mathrm{UCP}(n)$ is non-factorizable, cf.\ \cite[Corollary 2.3]{HaaMusat:CMP-2011}. 
For a systematic recipe for constructing non-factorizable channels, and more details of the convex structure of these sets, see also \cite{HMR:extreme}. 

In \cite[Example 3.3]{HaaMusat:CMP-2011}, an example is given of a $6 \times 6$ matrix $b$ whose associated Schur channel $\Sigma_b$ is factorizable, in fact $k$-noisy, for some $k$, \cite{Ricard:Markov}, but not a mixture of unitaries, so the first inclusion in \eqref{eq:chain} is also strict in general. (This provides another concrete example of the failure of QBC in dimension $n=6$.)

\begin{example}[Mixtures of unitaries] \label{ex:mixtures}
It was shown in \cite[Proposition 2.4]{HaaMusat:CMP-2011} that a factorizable channel $\Phi$ in dimension $n$ belongs to $\conv(\Aut(M_n(\C)))$, if and only it has an abelian ancilla. More specifically, if $\Phi(x) = \sum_{j=1}^N t_j u_jxu_j^*$, with $u_1, \dots, u_N$ unitaries in $M_n(\C)$, $t_1, \dots, t_N \ge 0$ and $\sum t_j = 1$, then $\Phi$ has an ancilla $(\cA,\tau)$ where $\cA=\C^N$ with trace $\tau$ that has weight $t_j$ on the $j$th copy of $\C$. With $u = (u_1, \dots, u_N) \in M_n(\C) \otimes \C^N$  we then get a representation for $\Phi$ as in \eqref{eq:2}. 
Note that $\cA$ has a trace preserving embedding into $M_k(\C)$ if and only if $t_1, \dots, t_N \in k^{-1} \Z$, in which case $\Phi$ is $k$-noisy. There are examples of mixtures of unitaries that are not $k$-noisy, for any $k \ge 2$, see Proposition~\ref{prop:k-noisy} below.
\end{example}

\begin{example}[The completely depolarizing channel] \label{ex:depolarizing}
In this example we shall see how the same channel can be represented by several different ancillas, also when assuming that the ancillas are ``minimal'', in the sense that $M_n(\C) \otimes \cA$ is generated as a von Neumann algebra by the images of $\alpha$ and $\beta$ in the situation of \eqref{eq:A-D}. 

We consider the \emph{completely depolarizing channel} $S_n$, for $n \ge 2$, defined by $S_n(x) = \tr_n(x) 1_n$, for $x \in M_n(\C)$. Its Choi matrix is $n^{-1} 1_n \otimes 1_n$, so it has Choi rank $n^2$. As
$$S_n(x) = \int_{\cU(n)} uxu^* \, du,$$
where $\cU(n)$ denotes the compact group of unitaries in $M_n(\C)$ and $du$ its Haar measure, we see that $S_n$ belongs to $\conv(\Aut(M_n(\C))$ (which is closed by Caratheodory's theorem). We can also see this more directly. Let
$$u = \begin{pmatrix} 1 & 0 & \cdots & 0 \\ 0 & \omega & \cdots & 0 \\ \vdots & & \ddots & 0 \\ 0 & 0 & \cdots & \omega^{n-1} \end{pmatrix}, \qquad v = \begin{pmatrix} 0 & 1 & 0 & \cdots & 0 \\ 0 & 0 & 1 & \cdots & 0 \\ \vdots & \vdots & & \ddots & 0 \\ 0 & 0 & 0 & \cdots & 1 \\ 1 & 0 & 0 & \cdots & 0\end{pmatrix},
 $$
be the Voiculescu $n \times n$ unitaries, where $\omega = \exp(2\pi i/n)$. Then
$$S_n(x) = \frac{1}{n^2} \sum_{i,j=1}^n v^iu^j x u^{-j}v^{-i}, \qquad x \in M_n(\C),$$
which explicitly belongs to $\conv(\Aut(M_n(\C))$. Any expression of $S_n$ as a convex combination in $\conv(\Aut(M_n(\C))$  must have at least $n^2$ terms, as the Choi rank of $S_n$ is $n^2$, so the expression above is the most economical one. It follows from \cite[Proposition 2.4]{HaaMusat:CMP-2011} that $S_n$ admits a factorization over $M_n(\C) \otimes \C^{n^2}$, i.e., with ancilla $\C^{n^2}$ equipped with the trace that has weight $1/n^2$ on each copy of $\C$, cf.\ Example~\ref{ex:mixtures}.

Here are two other ways of expressing $S_n$ as factorizations through tracial von Neumann algebras:
$$
\xymatrix@C-2.0pc{M_n(\C) \ar[dr]_-\alpha \ar[rr]^-{S_n}&& M_n(\C) \ar[dl]^-\beta \\ & M_n(\C) \otimes M_n(\C) &} 
\qquad 
\xymatrix@C-3.0pc{M_n(\C) \ar[dr]_-{\iota_1} \ar[rr]^-{S_n}&& M_n(\C) \ar[dl]^-{\iota_2} \\ & (M_n(\C), \tr_n) * (M_n(\C), \tr_n) &} 
$$
where $\alpha = \Id_n \otimes 1_n$ and $\beta = 1_n \otimes \Id_n$; let further
$$(\cB,\tau) := (M_n(\C), \tr_n) * (M_n(\C), \tr_n)$$ denote the reduced free product von Neumann algebra with its canonical inclusions $\iota_1$ and $\iota_2$ of $M_n(\C)$. We can use \eqref{eq:2}  together with the formula
$$\tau_\cB(\iota_1(x)\iota_2(y)) = (\tr_n \otimes \tr_n)(x \otimes y) = \tr_n(x) \tr_n(y), \qquad x, y \in M_n(\C),$$
to verify that these two factorizations represent $S_n$. The corresponding ancillas are $M_n(\C)$ and  the corner $\iota_1(e_{11}) \, \cB \, \iota_1(e_{11})$ of the (non-hyperfinite) II$_1$-factor $\cB$, respectively.
\end{example}

\noindent The question whether all factorizable channels are $k$-noisy, for some $k \ge 2$, arose naturally. In dimension $2$, this was settled in the positive by M\"uller-Hermes and Perry, \cite{Muller-Hermes-Perry:4-noisy}. More precisely, they showed that every qubit (= quantum channel in dimension $2$) is $4$-noisy. By contrast, in every dimension $n \ge 3$, there exist factorizable channels (even mixtures of unitaries) which are not $k$-noisy for any $k \ge 2$. This result was announced during the thematic program on operator algebras and  QIT at IHP, Paris, in the Fall of 2017, and was referenced in \cite{MusRor:Infdim}. We take this opportunity to include here the (thus far unpublished) details of the proof. 

\begin{proposition} \label{prop:k-noisy}
Let $n \ge 3$. Given unitaries $u_1,u_2$ in $M_n(\C)$ and $0 < t < 1$, let $\Phi(x) = t \, u_1xu_1^* + (1-t) \, u_2xu_2^*$, for $x \in M_n(\C)$. Assume that the set $\{1_n, u_1^*u_2, u_2^*u_1\}$ is linearly independent. Then $\Phi$ is $k$-noisy, for some $k \ge 2$ if and only if $t \in k^{-1} \Z$.
\end{proposition}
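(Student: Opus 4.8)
The plan is to prove the two implications separately. The ``if'' part is essentially the last assertion of Example~\ref{ex:mixtures}: if $t\in k^{-1}\Z$ then $t,1-t\in k^{-1}\Z$, so the abelian ancilla $\C^2$ with trace weights $(t,1-t)$ attached to $\Phi$ in Example~\ref{ex:mixtures} embeds trace preservingly into $(M_k(\C),\tr_k)$ (send its two minimal projections to projections of ranks $kt$ and $k(1-t)$), and hence $\Phi$ is $k$-noisy.

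For the ``only if'' part, suppose $\Phi(x)=(\Id_n\otimes\tr_k)\big(w\,(x\otimes 1_k)\,w^*\big)$ for a unitary $w\in M_n(\C)\otimes M_k(\C)$, and write $w=\sum_{\alpha,\beta=1}^{k} w_{\alpha\beta}\otimes e_{\alpha\beta}$ with $w_{\alpha\beta}\in M_n(\C)$. A direct computation gives $\Phi(x)=\frac1k\sum_{\alpha,\beta} w_{\alpha\beta}\,x\,w_{\alpha\beta}^*$, so $\{k^{-1/2}w_{\alpha\beta}\}_{\alpha,\beta}$ is a Kraus family for $\Phi$. The hypothesis forces $u_1,u_2$ to be linearly independent (else $u_1^*u_2$ would be a scalar), so $\{\sqrt t\,u_1,\sqrt{1-t}\,u_2\}$ is a minimal Kraus family for $\Phi$; since all Kraus families of a fixed completely positive map have the same linear span, each $w_{\alpha\beta}$ lies in $\Span\{u_1,u_2\}$. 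Write $w_{\alpha\beta}=\lambda_{\alpha\beta}u_1+\mu_{\alpha\beta}u_2$ and set $L=(\lambda_{\alpha\beta})$, $M=(\mu_{\alpha\beta})\in M_k(\C)$.

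I would then extract two facts. First, expanding $\Phi(x)=\frac1k\sum_{\alpha,\beta} w_{\alpha\beta}\,x\,w_{\alpha\beta}^*$ in the linearly independent family of maps $\{x\mapsto u_ixu_j^* : i,j\in\{1,2\}\}$ on $M_n(\C)$ and matching with $\Phi(x)=t\,u_1xu_1^*+(1-t)u_2xu_2^*$, the coefficient of $x\mapsto u_1xu_1^*$ gives $\Tr(LL^*)=\sum_{\alpha,\beta}|\lambda_{\alpha\beta}|^2=kt$. Second, substituting $w_{\alpha\beta}=\lambda_{\alpha\beta}u_1+\mu_{\alpha\beta}u_2$ into the block form of the relations $w^*w=1_n\otimes1_k$ and $ww^*=1_n\otimes1_k$, and using that $\{1_n,u_1^*u_2,u_2^*u_1\}$ is linearly independent --- equivalently $\{1_n,u_1u_2^*,u_2u_1^*\}$ is, the two conditions agreeing after conjugation by $u_1$ --- forces the coefficients of $1_n$ and of the two off-diagonal words to match separately, which yields in particular $L^*M=0$ and $LL^*+MM^*=1_k$. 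Now $L^*M=0$ says $\image(M)\perp\image(L)$, so $LL^*\cdot MM^*=0$; combined with $MM^*=1_k-LL^*$ this gives $(LL^*)^2=LL^*$. Hence $LL^*$ is an orthogonal projection, so $\Tr(LL^*)$ is a nonnegative integer, and $kt=\Tr(LL^*)\in\Z$, i.e.\ $t\in k^{-1}\Z$, as desired.

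The only delicate points are the two reduction steps --- that each $w_{\alpha\beta}$ lies in $\Span\{u_1,u_2\}$, and the coefficient bookkeeping for the unitarity relations --- after which the observation that $LL^*$ is forced to be a projection is immediate. Note that the restriction $n\ge 3$ is exactly what makes the hypothesis satisfiable: in $M_2(\C)$ every unitary $v$ lies in the span of $1_2$ and one spectral projection, so $\{1_2,v,v^*\}$, and hence $\{1_2,u_1^*u_2,u_2^*u_1\}$, is always linearly dependent.
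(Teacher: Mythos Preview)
Your proof is correct and follows essentially the same strategy as the paper's: write the implementing unitary as $u_1\otimes L + u_2\otimes M$, use the linear-independence hypothesis to separate coefficients in the unitarity relations, and conclude that $LL^*$ (your notation) is a projection of trace $kt$. The minor difference is that the paper obtains this decomposition and the trace normalization $\tr_k(L^*L)=t$ by invoking \cite[Theorem~2.2]{HaaMusat:CMP-2011} as a black box, whereas you rederive both from first principles via the standard ``all Kraus families have the same linear span'' argument; this makes your version more self-contained. Your endgame is also slightly cleaner: you use both unitarity relations ($L^*M=0$ from $w^*w=1$ and $LL^*+MM^*=1_k$ from $ww^*=1$) to get $(LL^*)^2=LL^*$ directly, while the paper uses only $u^*u=1$ together with the externally supplied trace conditions and a range-projection trace-counting argument to reach the same conclusion.
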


\noindent Unitaries $u_1, u_2$ in $M_n(\C)$ such that the set $\{1_n, u_1^*u_2, u_2^*u_1\}$ is linearly independent exist if and only if $n \ge 3$. Indeed, let $\omega \in \C \setminus \{\pm 1\}$, with $|\omega|=1$, and note that $\{(1,1,1), (1,\omega, \bar{\omega}), (1, \bar{\omega}, \omega)\}$ is a linearly independent set in $\C^3$. For $n \ge 3$, we can therefore take $u_1 = 1_n,$ and $u_2$ and $u_3$ to be diagonal unitaries in $M_n(\C)$ whose first three diagonal entries are $1,\omega, \bar{\omega}$ and r$1, \bar{\omega}, \omega$, respectively. Conversely, suppose that $\{1_n, u_1^*u_2, u_2^*u_1\}$ is linearly independent, and set $v = u_1^*u_2$. Then $\{1_n,v,v^*\}$ is linearly independent. Diagonalizing $v$, we see that this is possible only if $n \ge 3$.

\begin{proof} Set $v_1 = \sqrt{t} \, u_1$ and $v_2 = \sqrt{1-t} \, u_2$. Then $\Phi(x) = v_1xv_1^*+v_2xv_2^*$ is a Choi representation of $\Phi$ with $\{v_1,v_2\}$ linearly independent. An application of \cite[Theorem 2.2]{HaaMusat:CMP-2011} (and its proof) shows that $\Phi$ is $k$-noisy, for some $k \ge 2$, if and only if there exist $w_1,w_2 \in M_k(\C)$ such that $u:= v_1 \otimes w_1 + v_2 \otimes w_2$ is a unitary in $M_n(\C) \otimes M_k(\C)$ and $\tr_k(w_jw_i^*) = \delta_{ij}$, for $1 \le i,j \le 2$. 

If $t \in k^{-1} \Z$, then there exists a projection $p$ in $M_k(\C)$ with $\tr_k(p) = t$, which makes $u:= u_1 \otimes p + u_2 \otimes (1_k-p)$ a unitary in $M_n(\C) \otimes M_k(\C)$. Hence the conditions of \cite[Theorem 2.2]{HaaMusat:CMP-2011}  hold with $w_1 = t^{-1/2}p$ and $w_2 = (1-t)^{-1/2}(1_k-p)$, attesting that $\Phi$ is $k$-noisy.

For the reverse implication, assume $\Phi$ is $k$-noisy, for some $k \ge 2$, and let $w_1,w_2 \in M_k(\C)$ be as above. Set $z_1 = \sqrt{t} \, w_1$ and $z_2 = \sqrt{1-t} \, w_2$, so that $u:= u_1 \otimes z_1 + u_2 \otimes z_2$ is a unitary in $M_n(\C) \otimes M_k(\C)$, and $\tr_k(z_j^*z_i)=0$ when $i \ne j$ and $\tr_k(z_1^*z_1)= t$ and $\tr_k(z_2^*z_2)= 1-t$. We deduce that
$$1_n \otimes 1_k = u^*u = 1_n \otimes (z_1^*z_1 +z_2^*z_2) + u_2^*u_1 \otimes z_2^*z_1 + u_1^*u_2 \otimes z_2^*z_2.$$
By linear independence of $\{1_n, u_2^*u_1, u_1^*u_2\}$ we conclude that 
\begin{equation} \label{eq:3}
1_k = z_1^*z_1 +z_2^*z_2, \qquad 0 = z_2^*z_1 = z_1^*z_2.
\end{equation}
Let $p_j \in M_k(\C)$ be the range projections of $z_jz_j^*$, for $j=1,2$. Then $p_1 \perp p_2$ by \eqref{eq:3}. By \eqref{eq:3} we also get that $\|z_j\| \le 1$, so $z_jz_j^* \le p_j$, for $j=1,2$. Further,
$$1 = t+(1-t) = \tr_k(z_1 z_1^*) + \tr_k(z_2 z_2^*) \le \tr_k(p_1) + \tr_k(p_2) \le \tr_k(1_k) = 1.$$
Thus both inequalities above are equalities; in particular, we have $\tr_k(p_1) = \tr_k(z_1z_1^*) = t$, which implies that $t \in k^{-1} \Z$, as wanted.
\end{proof}

\noindent By the same proof as above one can arrive at the slightly stronger statement, namely that $\Phi$ in the proposition admits a factorization with ancilla $(\cA,\tau)$ if and only if $\cA$ contains a projection $p$ with $\tau(p)=t$.

In view of  Proposition~\ref{prop:k-noisy}, a natural question arises: Can a factorizable channel $\Phi$ always be approximated by $k$-noisy channels, where $k$ is allowed to vary? 

\begin{theorem}[Haagerup--Musat, {\cite[Theorem 3.6]{HaaMusat:CMP-2015}}] \label{thm:HM-1}
Let $\Phi \in \FM(n)$ be given. Then there exists a sequence $\{\Phi_m\}$ of $k$-noisy channels in $\FM(n)$ (with $k$ depending on $m$) such that $\lim_{m \to \infty} \|\Phi-\Phi_m\|_{\mathrm{cb}} = 0$ if and only if $\Phi$ has a factorization as in Definition~\ref{def:factorizable}, or equivalently, as in \eqref{eq:A-D}, with an ancilla $\cA$ that embeds into $\cR^\omega$
\end{theorem}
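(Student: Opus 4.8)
The plan is to establish both implications by passing to tracial ultraproducts, relying on three soft facts. (a) The $n\times n$-amplification functor commutes with tracial ultraproducts, so that $M_n(\C)\otimes\prod_\omega\cA_m$ is canonically $\prod_\omega\big(M_n(\C)\otimes\cA_m\big)$ as a tracial von Neumann algebra. (b) For any sequence $(k_m)$ the ultraproduct $\prod_\omega(M_{k_m}(\C),\tr_{k_m})$ embeds trace-preservingly into $\cR^\omega$ (each $M_{k_m}(\C)$ sits trace-preservingly inside $\cR$), while conversely $M_n(\C)\otimes\cR$ is a hyperfinite $\mathrm{II}_1$-factor, hence the $\|\cdot\|_2$-closure of an increasing union of matrix subalgebras $M_n(\C)\otimes M_c(\C)$, $c\to\infty$. (c) Since the space of linear maps $M_n(\C)\to M_n(\C)$ is finite-dimensional, $\|\Phi_m-\Phi\|_{\mathrm{cb}}$ is dominated, up to a constant, by $\sum_{i,j}\|\Phi_m(e_{ij})-\Phi(e_{ij})\|$; in particular $\|\cdot\|_{\mathrm{cb}}$-convergence of channels is equivalent to entrywise convergence of their values, and this is what lets one pass freely between ``approximating the unitary in $\|\cdot\|_2$'' and ``approximating the channel in $\|\cdot\|_{\mathrm{cb}}$''.

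For the ``only if'' direction I would argue as follows. Suppose $\Phi_m\to\Phi$ in $\|\cdot\|_{\mathrm{cb}}$ with each $\Phi_m$ being $k_m$-noisy, say $\Phi_m(x)=(\Id_n\otimes\tr_{k_m})(u_m(x\otimes 1)u_m^*)$ with $u_m\in\cU(M_n(\C)\otimes M_{k_m}(\C))$. Set $(\cA,\tau):=\prod_\omega(M_{k_m}(\C),\tr_{k_m})$ and, using (a), $u:=[\{u_m\}]\in\cU(M_n(\C)\otimes\cA)$. For each $x$ and each $i,j$ the $(i,j)$-entry of $(\Id_n\otimes\tau)(u(x\otimes 1_\cA)u^*)$ equals $\lim_\omega\tr_{k_m}\big((u_m(x\otimes 1)u_m^*)_{ij}\big)=\lim_\omega\Phi_m(x)_{ij}=\Phi(x)_{ij}$, the last equality because $\Phi_m\to\Phi$ pointwise. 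Hence $\Phi$ is factorizable with ancilla $\cA$, and $\cA$ embeds into $\cR^\omega$ by (b). (Note only pointwise convergence of the $\Phi_m$ is used here.)

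For the ``if'' direction, assume $\Phi$ factors as in Definition~\ref{def:factorizable} through $M_n(\C)\otimes\cA$ with $\cA$ embedding trace-preservingly into $\cR^\omega$. Composing the unitary with this embedding, I would first reduce to the case $\Phi(x)=(\Id_n\otimes\tau_{\cR^\omega})(u(x\otimes 1)u^*)$ for a unitary $u\in M_n(\C)\otimes\cR^\omega$, which by (a) we identify with $\prod_\omega(M_n(\C)\otimes\cR)$. Next, lift $u$ to a sequence of unitaries $u_m\in\cU(M_n(\C)\otimes\cR)$ via the standard unitary-lifting lemma for finite von Neumann algebra ultraproducts (polar-decompose an arbitrary bounded lift and adjoin a partial isometry between the complementary projections, which are equivalent in the $\mathrm{II}_1$-factor $M_n(\C)\otimes\cR$). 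By (b), Kaplansky density together with the same polar-decomposition correction yields, for each $m$, a unitary $\tilde u_m$ in a matrix subalgebra $M_n(\C)\otimes M_{c_m}(\C)$ with $\|u_m-\tilde u_m\|_2<1/m$, and then $\Phi_m(x):=(\Id_n\otimes\tr_{c_m})(\tilde u_m(x\otimes 1)\tilde u_m^*)$ is a $c_m$-noisy channel in $\FM(n)$. From the estimate $\|u_m(x\otimes 1)u_m^*-\tilde u_m(x\otimes 1)\tilde u_m^*\|_2\le 2\|x\|\,\|u_m-\tilde u_m\|_2$ and the identity $\Phi(x)_{ij}=\lim_\omega\tau_\cR\big((u_m(x\otimes 1)u_m^*)_{ij}\big)$, fact (c) gives $\lim_\omega\|\Phi_m-\Phi\|_{\mathrm{cb}}=0$; choosing $m_1<m_2<\cdots$ with $\|\Phi_{m_\ell}-\Phi\|_{\mathrm{cb}}<1/\ell$ (each such index set lies in $\omega$, hence is infinite) and relabelling produces the required sequence.

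The main obstacle will be the ``if'' direction, and within it the insistence on remaining inside the genuine unitary groups at every stage — both when lifting $u$ from $M_n(\C)\otimes\cR^\omega$ to $M_n(\C)\otimes\cR$ and when approximating $u_m$ inside a finite-dimensional matrix subalgebra — since only then are the approximating maps honestly $k$-noisy rather than merely unital completely positive. Carrying $\|\cdot\|_2$-control through these unitary corrections and then converting it into $\|\cdot\|_{\mathrm{cb}}$-control is the technical heart; the conversion is precisely where the finite-dimensionality of $M_n(\C)$ (fact (c)) is used. By contrast, the ``only if'' direction is essentially bookkeeping with ultraproducts.
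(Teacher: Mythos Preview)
This paper does not prove the theorem; it is quoted from \cite[Theorem~3.6]{HaaMusat:CMP-2015} with only the subsequent remark that the cb-norm may be replaced by any other norm, so there is no proof here to compare your attempt against. That said, your argument is correct and is precisely the expected ultraproduct approach: for the ``only if'' direction, bundle the $k_m$-noisy data into a single factorization through $\prod_\omega(M_{k_m}(\C),\tr_{k_m})\hookrightarrow\cR^\omega$; for the ``if'' direction, pass to ancilla $\cR^\omega$, identify $M_n(\C)\otimes\cR^\omega$ with $\prod_\omega(M_n(\C)\otimes\cR)$, lift the unitary to a sequence of unitaries in $M_n(\C)\otimes\cR$, and approximate each in $\|\cdot\|_2$ by a unitary in a matrix subalgebra. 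The one step that deserves a line beyond your sketch is checking that the polar-decomposition correction after Kaplansky remains $\|\cdot\|_2$-small: if $y$ is the contraction approximant and $p$ the kernel projection of $|y|$, then $p\le(1-|y|)^2$ gives $\tau(p)\le\|1-|y|\|_2^2\le\|1-y^*y\|_2^2$, so both the $v(1-|y|)$ and the added partial isometry are controlled. Your extraction of a genuine convergent subsequence from the ultralimit at the end is also handled correctly.
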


\noindent As all norms on the finite-dimensional vector space of linear maps $M_n(\C) \to M_n(\C)$ are equivalent,  we can in the theorem above replace the cb-norm with any other norm. 

Let $\FM_\mat(n) \subseteq \FM_\fin(n) \subseteq \FM_I(n) \subseteq \FM(n)$ denote the set of factorizable maps in dimension $n$ that admit an ancilla that is a matrix algebra, finite-dimensional, and of type I, respectively. In this notation, $\FM_\mat(n)$ is the class of all $k$-noisy channels. The results below were mentioned in \cite{MusRor:Infdim}, with the promise of a proof in a forthcoming paper. We give below the announced proof.

\begin{proposition} \mbox{} \label{prop:FM}
\begin{enumerate}
\item $\FM_\mat(n)$ is not closed and not convex, for $n \ge 3$,
\item $\overline{\FM_\mat(n)} = \overline{\FM_\fin(n)}$ and $\conv(\FM_\mat(n)) = \FM_\fin(n)$.
\end{enumerate}
\end{proposition}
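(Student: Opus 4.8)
The plan is to deduce Proposition~\ref{prop:FM} from the structure theory of finite-dimensional von Neumann algebras together with the two compactness/convexity facts already in play: that $\FM(n)$ is closed and convex, and that a factorizable channel has abelian ancilla iff it is a mixture of unitaries (Example~\ref{ex:mixtures}). The key observation for part (ii) is that any finite-dimensional tracial von Neumann algebra $(\cA,\tau)$ is a finite direct sum $\bigoplus_{\ell} M_{k_\ell}(\C)$ with trace weights $t_\ell = \tau(1_{M_{k_\ell}})$, and if $\Phi$ factors through $M_n(\C)\otimes\cA$ via a unitary $u$, then $u = \bigoplus_\ell u_\ell$ with $u_\ell$ unitary in $M_n(\C)\otimes M_{k_\ell}(\C)$, and $\Phi = \sum_\ell t_\ell \, \Phi_\ell$ where $\Phi_\ell(x) = (\Id_n \otimes \tr_{k_\ell})(u_\ell(x\otimes 1)u_\ell^*)$ is $k_\ell$-noisy. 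This immediately gives $\FM_\fin(n) \subseteq \conv(\FM_\mat(n))$; the reverse inclusion is clear since a convex combination of $k_\ell$-noisy channels factors through the direct sum ancilla with appropriate weights (any convex combination of matrix algebras with rational weights is itself finite-dimensional, and one reduces general convex weights to this case, or simply observes $\FM_\fin(n)$ is convex and contains $\FM_\mat(n)$). Hence $\conv(\FM_\mat(n)) = \FM_\fin(n)$. The closure statement $\overline{\FM_\mat(n)} = \overline{\FM_\fin(n)}$ then follows: one inclusion is trivial, and for the other, given $\Phi = \sum_\ell t_\ell \Phi_\ell \in \FM_\fin(n)$ with $\Phi_\ell$ being $k_\ell$-noisy, approximate each $t_\ell$ by rationals $p_\ell/q$ with common denominator $q$; replacing the ancilla weight argument by a common refinement produces a $(q\cdot\mathrm{lcm}(k_\ell))$-noisy channel close to $\Phi$ — i.e., $\Phi_\ell$ being $k_\ell$-noisy is also $mk_\ell$-noisy for any $m$ (using $M_{k_\ell}(\C) \hookrightarrow M_{mk_\ell}(\C)$ trace-preservingly, cf.\ relation (i) in the Remark), so all $\Phi_\ell$ can be taken $K$-noisy for a common $K$, and then $\sum (p_\ell/q)\Phi_\ell$ is $qK$-noisy.

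For part (i), the non-closure and non-convexity of $\FM_\mat(n)$ for $n \ge 3$ I would read off directly from Proposition~\ref{prop:k-noisy} and the structure just described. Non-convexity: pick unitaries $u_1, u_2 \in M_n(\C)$ with $\{1_n, u_1^*u_2, u_2^*u_1\}$ linearly independent (possible for $n\ge 3$, as noted after the proposition), and consider $\Phi_t(x) = t\,u_1xu_1^* + (1-t)\,u_2xu_2^*$. By Proposition~\ref{prop:k-noisy}, $\Phi_t$ is $k$-noisy for some $k$ iff $t \in \Q$. Taking $t_1, t_2 \in \Q$ but a convex combination $s t_1 + (1-s) t_2 \notin \Q$ (e.g.\ $s$ irrational, or rather fix $t_1 = 0, t_2 = 1$ realized as endpoints... better: take $\Phi_{1/2}$ and $\Phi_{1/3}$, both noisy, and note $\tfrac{1}{2}\Phi_{t_1} + \tfrac{1}{2}\Phi_{t_2}$ need not be of the special two-term form) — cleaner is: $\Phi_{t}$ for irrational $t$ lies in $\overline{\FM_\mat(n)}$ (by part (ii) and Theorem~\ref{thm:HM-1}, or directly since $\Phi_t = \lim \Phi_{t_m}$ with $t_m \in \Q$, $t_m \to t$, each $\Phi_{t_m}$ being noisy) but is \emph{not} $k$-noisy for any $k$, which simultaneously shows $\FM_\mat(n)$ is not closed. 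Non-convexity then follows because $\Phi_t$ with irrational $t$ is a convex combination $t\,\Phi_1^{aut} + (1-t)\,\Phi_2^{aut}$ of two automorphisms (trivially $1$-noisy, as they factor through $\cA = \C$), yet is not in $\FM_\mat(n)$.

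The main obstacle I anticipate is bookkeeping in the ``common denominator / common matrix size'' step for the closure statement in (ii): one must check that embedding $M_{k_\ell}(\C)$ into a larger matrix algebra is compatible with the convex-combination structure so that the resulting channel is genuinely $K'$-noisy for a single $K'$, rather than merely in $\FM_\fin(n)$. This is where relation (i) of the Remark ($\cG_k(n) \subseteq \cG_{k'}(n)$ when $k\mid k'$, and its channel analogue $\FM_{\mathrm{matr}, k}(n) \subseteq \FM_{\mathrm{matr},k'}(n)$) does the work: a $k$-noisy channel is $k'$-noisy whenever $k \mid k'$, so after passing to a common multiple $K$ of all the $k_\ell$ we may assume every $\Phi_\ell$ factors through $M_n(\C) \otimes M_K(\C)$, and then $\sum_\ell (p_\ell/q)\Phi_\ell$ factors through $M_n(\C) \otimes M_{qK}(\C)$ by distributing the block $M_K(\C)$ with multiplicities $p_\ell$ inside $M_{qK}(\C)$. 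Everything else is routine once the direct-sum decomposition of finite-dimensional ancillas is invoked.
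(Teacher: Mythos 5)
Your proposal is correct and follows essentially the same route as the paper: part (i) is read off from Proposition~\ref{prop:k-noisy} exactly as in the text (automorphisms are matrix-ancilla factorizable but their convex combinations with irrational weight are not, and rational-weight approximants give non-closure), and part (ii) rests on the same block decomposition $\cA=\bigoplus_\ell M_{k_\ell}(\C)$ with $\Phi=\sum_\ell t_\ell\Phi_\ell$. The only cosmetic difference is in the closure step of (ii): the paper fixes the finite-dimensional ancilla and the unitary and perturbs the trace to a rational one (using continuity of $\tau\mapsto\Phi_\tau$), whereas you perturb the convex coefficients to a common denominator and track divisibility of matrix sizes --- the same computation viewed from the other end, and your bookkeeping for the $qK$-noisy claim checks out.
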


\begin{proof} (i) follows from Proposition~\ref{prop:k-noisy}. Indeed, $\Aut(M_n(\C)) \subseteq \FM_\mat(n)$, but $\conv(\Aut(M_n(\C)))$ is not contained in $\FM_\mat(n)$, when $n \ge 3$. Also, the channels considered in Proposition~\ref{prop:k-noisy} belong to $\FM_\mat(n)$ if and only if $t \in \Q \cap (0,1)$ which shows that  $\FM_\mat(n)$ is not closed.

(ii). For the first part we must show that $\FM_\fin(n) \subseteq \overline{\FM_\mat(n)}$. We can write a typical element of $\FM_\fin(n)$ as in \eqref{eq:1}:
$$\Phi_\tau(x) = (\Id_n \otimes \tau)(u (x \otimes 1_\cA)u^*), \qquad x \in M_n(\C),$$
for some finite-dimensional tracial von Neumann algebra $(\cA,\tau)$ and some unitary $u \in M_n(\C) \otimes \cA$. We fix $\cA$ and $u$ and let $\tau$ vary. If $\tau$ is ``rational'', i.e., $\tau$ takes rational values on all projections in $\cA$ (or just the minimal central ones), then $\cA$ has a trace preserving embedding into a  full matrix algebra $M_k(\C)$, in which case $\Phi_\tau$ is $k$-noisy. The set of rational traces is norm dense in the set of all traces in $\cA$, and the map $\tau \mapsto \Phi_\tau$ is continuous. This proves that each channel in $\FM_\fin(n)$ can be approximated by $k$-noisy ones. 

We proceed to prove the second part of the statement. Let $\Phi_1, \dots, \Phi_N \in \FM_\mat(n)$ and $t_1, \dots, t_N \ge 0$ with $\sum t_j = 1$ be given. Then, for $1 \le j \le N$,
$$\Phi_j(x) = (\Id_n \otimes \tr_{k_j})(u_j(x \otimes 1_{k_j})u_j^*), \qquad x \in M_n(\C),$$ 
for some $k_j \ge 2$ and some unitary $u_j$ in $M_n(\C) \otimes M_{k_j}(\C)$. Set $\cA = \bigoplus_{j=1}^N M_{k_j}(\C)$ and let $\tau$ be the trace on $\cA$ given by $\tau(y_1, \dots, y_N) = \sum t_j \, \tr_{k_j}(y_j)$, for $y_j $ in $M_{k_j}(\C)$. Identifying $M_n(\C) \otimes \cA=\bigoplus_{j=1}^N M_n(\C) \otimes M_{k_j}(\C)$, set $u = (u_1, \dots, u_N)$ in $M_n(\C) \otimes \cA$. Then 
$$\Big(\sum_{j=1}^N t_j \, \Phi_j\Big)(x) = (\Id_n \otimes \tau)(u (x \otimes 1_\cA)u^*), \qquad x \in M_n(\C),$$
which shows that $\sum_{j=1}^N t_j \, \Phi_j$ belongs to $\FM_\fin(n)$. 

Conversely, let $\Phi \in \FM_\fin(n)$, let $(\cA,\tau)$ be an ancilla for $\Phi$ and let $u \in M_n(\C) \otimes \cA$ be a unitary such that $\Phi$ has a presentation as in \eqref{eq:1}. Write $\cA = \bigoplus_{j=1}^N M_{k_j}(\C)$, let $q_j \in \cA$ be the unit for the $j$th summand, and set $t_j = \tau(q_j)$. We may assume that $t_j >0$, for all $j$ (if not, then we discard some of the summands in $\cA$). Then $q_j \cA q_j = M_{k_j}(\C)$ and $\sum q_j = 1_\cA$. Set
$$\Phi_j(x) = t_j^{-1} \, (\Id_n \otimes \tau)(u(x \otimes q_j)u^*), \qquad x \in M_n(\C).$$
Then $\Phi_j$ is $k_j$-noisy and $\Phi = \sum_{j=1}^N t_j \, \Phi_j$.
\end{proof}

\noindent
The result below is essentially contained in \cite[Proposition 2.8]{HaaMusat:CMP-2011} and 
\cite[Theorem 3.7]{HaaMusat:CMP-2015}, and their proofs. 

\begin{proposition} \label{prop:Schur}
 Let $b$ be a correlation matrix  in $M_n(\C)$, and consider the associated Schur channel $\Sigma_b$. Then $\Sigma_b$ is  factorizable with ancilla $(\cA, \tau)$ if and only if there are unitaries $u_1, \dots, u_n$ in $M_n(\C) \otimes \cA$ so that $b = [(\tr_n \otimes \tau)(u_iu_j^*)]$. In particular, 
\begin{enumerate}
\item $\Sigma_b$ belongs to $\FM(n)$ if and only if $b \in \cG(n)$.
\item $\Sigma_b$ belongs to $\FM_\mat(n)$, $\FM_\fin(n)$, and $\FM_I(n)$, respectively, if and only if $b$ belongs to $\cG_\mat(n)$, $\cG_\fin(n)$, and $\cG_I(n)$, respectively.
\item $\Sigma_b$ belongs to $\overline{\FM_\mat(n)}$, i.e., it is a limit of $k$-noisy channels (with $k$ allowed to vary) if and only if $b \in  \overline{\cG_\mat(n)}$.
\end{enumerate}
\end{proposition}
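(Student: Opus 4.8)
The plan is to prove the displayed equivalence and then deduce (i)--(iii) formally, the crucial feature being that the equivalence preserves the ancilla $(\cA,\tau)$ and hence its type. (In its cleanest form the equivalence reads: $\Sigma_b$ is factorizable with ancilla $(\cA,\tau)$ iff there are unitaries $u_1,\dots,u_n$ \emph{in} $\cA$ with $b=[\tau(u_iu_j^*)]$; putting $1_n\otimes u_i$ then also realizes $b$ as $[(\tr_n\otimes\tau)(u_iu_j^*)]$ with unitaries in $M_n(\C)\otimes\cA$.) The ``if'' direction is a direct computation: set $u=\sum_{i=1}^n e_{ii}\otimes u_i\in\cU(M_n(\C)\otimes\cA)$; then $u(e_{kl}\otimes1_\cA)u^*=e_{kl}\otimes u_ku_l^*$, so $(\Id_n\otimes\tau)(u(e_{kl}\otimes1_\cA)u^*)=\tau(u_ku_l^*)e_{kl}=b_{kl}e_{kl}=\Sigma_b(e_{kl})$, and linearity finishes it.

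For the ``only if'' direction, which is the structural heart and the place where ``correlation matrix'' is used, suppose $\Sigma_b(x)=(\Id_n\otimes\tau)(u(x\otimes1_\cA)u^*)$ for a unitary $u\in M_n(\C)\otimes\cA$. Write this as $\Sigma_b=\beta^*\circ\alpha$ with $\alpha=\Id_n\otimes1_\cA$ and $\beta=(\Ad u^*)\circ\alpha$, two unital $(\Tr_n\otimes\tau)$-preserving $^*$-homomorphisms of $M_n(\C)$ into the tracial von Neumann algebra $M_n(\C)\otimes\cA$, cf.\ \cite[Theorem 2.2]{HaaMusat:CMP-2011}. Since $b$ is a correlation matrix, $b_{ii}=1$, so $\Sigma_b$ is the identity on the diagonal MASA $\Delta_n\subseteq M_n(\C)$. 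The key step: for a unitary $d\in\Delta_n$ one has $\langle\alpha(d),\beta(d)\rangle_{\Tr_n\otimes\tau}=\langle d,\beta^*\alpha(d)\rangle_{\Tr_n}=\langle d,\Sigma_b(d)\rangle_{\Tr_n}=\langle d,d\rangle_{\Tr_n}=n$, while $\|\alpha(d)\|_2^2=(\Tr_n\otimes\tau)(\alpha(d^*d))=n=\|\beta(d)\|_2^2$, so $\|\alpha(d)-\beta(d)\|_2^2=0$ and $\alpha(d)=\beta(d)$. Hence $u$ commutes with $\Delta_n\otimes1_\cA$; maximality of $\Delta_n$ forces, upon expanding $u$ in matrix units, $u=\sum_{i=1}^n e_{ii}\otimes u_i$ with each $u_i\in\cU(\cA)$, and then $\Sigma_b(e_{kl})=(\Id_n\otimes\tau)(e_{kl}\otimes u_ku_l^*)=\tau(u_ku_l^*)e_{kl}$ gives $b=[\tau(u_iu_j^*)]$. (Equivalently, one gets $u(e_{ii}\otimes1_\cA)u^*=e_{ii}\otimes1_\cA$ directly from $(\Id_n\otimes\tau)(u(e_{ii}\otimes1_\cA)u^*)=e_{ii}$ via positivity of the corners $(e_{jj}\otimes1_\cA)\,u(e_{ii}\otimes1_\cA)u^*\,(e_{jj}\otimes1_\cA)$ and faithfulness of $\tau$.)

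Given the equivalence, (i) and (ii) are immediate: it is a dictionary between factorizations of $\Sigma_b$ over $(\cA,\tau)$ and realizations $b=[\tau(u_iu_j^*)]$ with $u_i\in\cU(\cA)$, so imposing that $\cA$ be a full matrix algebra, finite dimensional, of type I, or arbitrary turns $\FM_\mat(n)$, $\FM_\fin(n)$, $\FM_I(n)$, $\FM(n)$ into $\cG_\mat(n)$, $\cG_\fin(n)$, $\cG_I(n)$, $\cG(n)$ respectively (for $\FM_\mat(n)$, recall the $k$-noisy ancilla $(M_k(\C),\tr_k)$ matches $\cG_k(n)$). For (iii), the ``if'' direction is easy: if $b=\lim_m b_m$ with $b_m\in\cG_\mat(n)$, then $\Sigma_{b_m}\in\FM_\mat(n)$ by (ii) and $\Sigma_{b_m}\to\Sigma_b$ by linearity of $b\mapsto\Sigma_b$, so $\Sigma_b\in\overline{\FM_\mat(n)}$.

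The ``only if'' direction of (iii) is the main obstacle: one must upgrade ``$\Sigma_b$ is a limit of $k$-noisy channels'' to ``$\Sigma_b$ is a limit of $k$-noisy \emph{Schur} channels'', and for this I would invoke Theorem~\ref{thm:HM-1}. If $\Sigma_b\in\overline{\FM_\mat(n)}$ (in any norm, all being equivalent), then $\Sigma_b$ admits a factorization with an ancilla $(\cA,\tau)$ that embeds trace-preservingly into $\cR^\omega$; by the equivalence $b=[\tau(u_iu_j^*)]$ with $u_i\in\cU(\cA)$, and transporting the $u_i$ into $\cR^\omega$ shows $b$ is a matrix of unitary correlations in $\cR^\omega$. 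One then checks that the set of such matrices equals $\overline{\cG_\mat(n)}$: it contains $\cG_\mat(n)$ (matrix algebras sit trace-preservingly inside $\cR\subseteq\cR^\omega$), it is closed (a standard ultrapower argument), and each unitary in $\cR^\omega$ lifts to a sequence of unitaries in $\cR$ whose $\tau_\cR$-correlations are $\|\cdot\|_2$-approximated by matrix correlations via Kaplansky density --- precisely the approximation scheme used in the proof of Theorem~\ref{thm:Kir1}. Hence $b\in\overline{\cG_\mat(n)}$. (If one prefers to avoid Theorem~\ref{thm:HM-1}: write $\Sigma_b=\lim_m\Phi_m$ with $\Phi_m$ $k_m$-noisy and average $\Phi_m$ over conjugation by the diagonal torus; the average is again a Schur channel $\Sigma_{b_m}$ with $b_m\to b$, and it factors through the type I ancilla $M_{k_m}(\C)\otimes L^\infty(\T^n)$, whence $b_m\in\cG_I(n)$; one then concludes using that every type I --- indeed hyperfinite --- tracial von Neumann algebra embeds trace-preservingly into $\cR$, so $\cG_I(n)\subseteq\overline{\cG_\mat(n)}$.)
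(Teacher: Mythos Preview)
Your main line of argument is correct and matches the paper's approach: the displayed equivalence is precisely \cite[Proposition~2.8]{HaaMusat:CMP-2011} (you supply the details, which the paper omits), (i) and (ii) follow formally, and for the ``only if'' in (iii) both you and the paper combine Theorem~\ref{thm:HM-1} with Kirchberg's Theorem~\ref{thm:Kir1}. The paper phrases it contrapositively---if $b\notin\overline{\cG_\mat(n)}$ then by Theorem~\ref{thm:Kir1} no ancilla realizing $b$ embeds in $\cR^\omega$, whence by Theorem~\ref{thm:HM-1} $\Sigma_b\notin\overline{\FM_\mat(n)}$---while you argue directly, but the content is identical.

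Your parenthetical alternative for (iii), however, has a gap. Averaging a general channel $\Phi$ over conjugation by the diagonal torus does \emph{not} produce a Schur channel: the map $E(\Phi)=\int_{\T^n}\Ad(d)\circ\Phi\circ\Ad(d^{-1})\,dd$ satisfies $E(\Phi)(e_{kl})\in\C e_{kl}$ only for $k\neq l$; on the diagonal one gets $E(\Phi)(e_{kk})=\sum_i\langle\Phi(e_{kk}),e_{ii}\rangle\,e_{ii}$, which is merely the diagonal part of $\Phi(e_{kk})$ and need not equal $e_{kk}$. (Equivalently, the fixed points of this torus action are the $\Delta_n$-bimodule maps, not the Schur multipliers.) So $E(\Phi_m)$ is in $\FM_I(n)$ as you say, and $E(\Phi_m)\to\Sigma_b$, but you cannot invoke (ii) to extract $b_m\in\cG_I(n)$ because $E(\Phi_m)$ is not of the form $\Sigma_{b_m}$. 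The route through Theorem~\ref{thm:HM-1} is the one that works.
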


\begin{proof} The first part is \cite[Proposition 2.8]{HaaMusat:CMP-2011}; (i) and (ii) are immediate consequences thereof.

(iii). If $\{b_m\}$ is a sequence in $\cG_\mat(n)$ converging to $b$, then $\Sigma_{b_m} \to \Sigma_b$, and each $\Sigma_{b_m}$ belongs to $\FM_\mat(n)$. Conversely, suppose that $b \in \cG(n)$ does not belong to $\overline{\cG_\mat(n)}$. Let $(\cA,\tau)$ be an ancilla for $\Sigma_b$. Then $b = [(\tr_n \otimes \tau)(u_iu_j^*)]$, for some unitaries $u_1, \dots, u_n$ in $M_n(\C) \otimes \cA$. By Kirchberg's theorem (Theorem~\ref{thm:Kir1} above) and by the assumption on $b$, we conclude that $M_n(\C) \otimes \cA$, and therefore also $\cA$, do not embed into $\cR^\omega$. By Theorem~\ref{thm:HM-1}, this implies that $\Sigma_b$ is not a limit of $k$-noisy channels.
\end{proof}

\noindent Combining Proposition~\ref{prop:Schur} with Theorem~\ref{thm:MR-notclosed} it was shown in \cite{MusRor:Infdim} that there are factorizable channels in all dimensions $n \ge 11$ that do not factor through a tracial von Neumann algebra of type I, i.e., require an ancilla of type II$_1$. With Theorem~\ref{thm:Mori} of Mori and the added comment below it, we extend the above mentioned results to all $n \ge 8$. 

\begin{corollary}[Musat--R\o rdam, Mori] For each $n \ge 8$ we have $\FM_I(n) \subsetneq \FM(n)$, i.e., there are factorizable channels in all these dimensions that require an ancilla of type II$_1$.
\end{corollary}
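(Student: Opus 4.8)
The plan is to produce, for each $n \ge 8$, an explicit factorizable Schur channel lying in $\FM(n) \setminus \FM_I(n)$, by combining Mori's non-closure result (Theorem~\ref{thm:Mori} and the remark following it) with the Schur-channel dictionary of Proposition~\ref{prop:Schur}. First I would recall from the discussion below Theorem~\ref{thm:Mori} that, for any $\theta \in \R \setminus \Q$, Mori produces unitaries $u_1, \dots, u_8$ in the hyperfinite II$_1$-factor $\cR$ whose correlation matrix $b := [\tau_\cR(u_i u_j^*)] \in M_8(\C)$ does not belong to $\cG_I(8)$: any tracial von Neumann algebra $(\cM,\tau)$ carrying unitaries with this correlation matrix must contain $e^{2\pi i\theta}1_\cM$ written as a product of four symmetries, which is impossible on any finite-dimensional Hilbert space (the determinant of a symmetry is $\pm 1$), so $\cM$ has no finite-dimensional representation and is therefore of type II$_1$.

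Second, to reach an arbitrary dimension $n \ge 8$ I would pad: set $u_9 = \dots = u_n = 1_\cR$ and let $b^{(n)} := [\tau_\cR(u_i u_j^*)]_{i,j=1}^n \in M_n(\C)$. Since this is a matrix of correlations of unitaries in the tracial von Neumann algebra $\cR$, we have $b^{(n)} \in \cG(n)$ directly from the definition. On the other hand $b^{(n)} \notin \cG_I(n)$: if unitaries $v_1, \dots, v_n$ in some tracial von Neumann algebra of type I realised $b^{(n)}$, then the subfamily $v_1, \dots, v_8$ would realise $b$ in that same type I algebra, contradicting $b \notin \cG_I(8)$. (This uses only the trivial corner-restriction fact that deleting unitaries together with the corresponding rows and columns sends $\cG_I(n)$ into $\cG_I(8)$.)

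Third, I would invoke Proposition~\ref{prop:Schur}: part (i) gives $\Sigma_{b^{(n)}} \in \FM(n)$ because $b^{(n)} \in \cG(n)$, while part (ii) gives $\Sigma_{b^{(n)}} \notin \FM_I(n)$ because $b^{(n)} \notin \cG_I(n)$. Hence $\FM_I(n) \subsetneq \FM(n)$, and $\Sigma_{b^{(n)}}$ is a factorizable channel whose ancilla cannot be taken of type I; since a type I ancilla is excluded, any ancilla realising it must contain a II$_1$ part.

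The step requiring the most care is the second one: one must be sure that Mori's obstruction lives at the level of $\cG_I$ and not merely of $\cG_\fin$, and that padding up to dimension $n$ does not destroy it. Both points are immediate once one notes, as the text below Theorem~\ref{thm:Mori} stresses, that Mori's argument rules out \emph{all} finite-dimensional representations of the ambient algebra — not just trace-preserving embeddings into a single matrix algebra — and that restriction to a principal corner of the correlation matrix is compatible with the type I condition. Everything else is a direct application of results already quoted in the excerpt.
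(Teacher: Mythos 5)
Your proposal is correct and follows essentially the same route as the paper: it combines the observation below Theorem~\ref{thm:Mori} (that Mori's correlation matrix lies in $\cG(n)\setminus\cG_I(n)$) with the Schur-multiplier dictionary of Proposition~\ref{prop:Schur}(i)--(ii). The only thing you add is an explicit padding-by-identities argument to pass from $8$ to general $n\ge 8$, which the paper leaves implicit and which is indeed valid.
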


\noindent With the negative answer to CEP obtained in \cite{JNVWY:MIP*=RE}, we get $ \overline{\cG_\mat(n)} \subsetneq \cG(n)$, for some $n \ge 4$ (as remarked in (vi) below Theorem~\ref{thm:Kir1}), which has the following remarkable consequence for the existence of exotic factorizable maps:

\begin{corollary} Let $b \in \cG(n) \setminus  \overline{\cG_\mat(n)}$, for some $n \ge 4$. 
Then $\Sigma_b$ is factorizable but  not a limit of $k$-noisy channels, and no ancilla for $\Sigma_b$ admits an embedding into $\cR^\omega$. 
\end{corollary}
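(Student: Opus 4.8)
The plan is to obtain the corollary as an immediate consequence of Proposition~\ref{prop:Schur} and Theorem~\ref{thm:HM-1}: each of the three assertions is the translation, for the particular channel $\Sigma_b$, of a statement already established about correlation matrices and factorizable maps in Section~\ref{sec:factorizable}. The only input not internal to that section is that $\cG(n) \setminus \overline{\cG_\mat(n)} \ne \emptyset$ for some $n \ge 4$, which is exactly the consequence of the negative solution of CEP recorded just before the corollary; this is what guarantees the hypothesis on $b$ can be met.

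Concretely, I would proceed in three short steps. First, by Proposition~\ref{prop:Schur}(i) the assumption $b \in \cG(n)$ is equivalent to $\Sigma_b \in \FM(n)$, so $\Sigma_b$ is factorizable. Second, Proposition~\ref{prop:Schur}(iii) states that $\Sigma_b \in \overline{\FM_\mat(n)}$ if and only if $b \in \overline{\cG_\mat(n)}$; since $b \notin \overline{\cG_\mat(n)}$ by hypothesis, $\Sigma_b$ is not a norm-limit of $k$-noisy channels. Third, to show that no ancilla for $\Sigma_b$ embeds into $\cR^\omega$, I would apply Theorem~\ref{thm:HM-1} contrapositively: were $\Sigma_b$ to admit a factorization as in Definition~\ref{def:factorizable} (equivalently, as in \eqref{eq:A-D}) with an ancilla $\cA$ embedding into $\cR^\omega$, then $\Sigma_b$ would be a limit of $k$-noisy channels, contradicting the second step.

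If one prefers a self-contained argument for the last step, avoiding Theorem~\ref{thm:HM-1}, one can argue as in the proof of Proposition~\ref{prop:Schur}(iii): for an arbitrary ancilla $(\cA,\tau)$ of $\Sigma_b$, Proposition~\ref{prop:Schur} provides unitaries $u_1, \dots, u_n$ in $M_n(\C) \otimes \cA$ with $b = [(\tr_n \otimes \tau)(u_iu_j^*)]$; a trace-preserving normal embedding $\cA \hookrightarrow \cR^\omega$ would then induce one $M_n(\C) \otimes \cA \hookrightarrow M_n(\C) \otimes \cR^\omega \cong \cR^\omega$, whereupon Kirchberg's theorem (Theorem~\ref{thm:Kir1}) would force $b \in \overline{\cG_\mat(n)}$, a contradiction. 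There is no essential obstacle; the only point deserving a line of justification is the identification $M_n(\C) \otimes \cR^\omega \cong \cR^\omega$ (equivalently, the existence of a trace-preserving embedding of $M_n(\C) \otimes \cR^\omega$ into $\cR^\omega$), which follows from $M_n(\C) \otimes \cR \cong \cR$ and the naturality of the ultrapower construction. I would therefore present the whole proof in a handful of lines.
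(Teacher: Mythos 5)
Your proposal is correct and follows exactly the route the paper intends: the corollary is stated there without proof as an immediate consequence of Proposition~\ref{prop:Schur}(i) and (iii) together with Theorem~\ref{thm:HM-1}, which is precisely your three-step argument. Your optional self-contained version of the last step also just reproduces the paper's own proof of Proposition~\ref{prop:Schur}(iii), so there is nothing genuinely different here.
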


\section{The Connes-Kirchberg problem and Tsirelson's conjecture} \label{sec:Tsirelson}

\noindent Tsirelson's Conjecture concerns (equality of) sets of quantum correlations arising in two different quantum models of non-locality: the spatial (tensor product) and commutativity of observables, respectively. We shall now describe in more detail  these sets, along with the classical one.

A quantum system is emitted from a source and two scientists, Alice and Bob, residing in spatially separated labs, each receives part of the system and performs measurements on it. The state of the system is described by a wave function (a unit vector in a Hilbert space), observables are self-adjoint operators on the Hilbert space, and one measures corresponding eigenvalues of the observable. Say that Alice and Bob measure any one of $n$ possible observables, each with $k$ possible outcomes. The collection of probabilities $p(a,b \, | \, x,y)$, that Alice gets outcome $a$ when measuring observable $x$ and Bob gets outcome $b$ when measuring observable $y$, gives rise to an $nk \times nk$ real matrix, $[p(a,b \, | \, x,y)]_{a,b;x,y}$, a matrix of correlations. Mathematical models for classical and quantum correlations and the related concept of entanglement are at the heart of the Einstein--Podolsky--Rosen (EPR) paradox from 1935.

According to the classical model, there exist a probability space $(\Omega,\mu)$ (of hidden variables) and measurable partitions $\{X_a^x\}_{1 \le a \le k}$ and $\{Y_b^y\}_{1 \le b \le k}$ (one such partition for each pair of observables $(x,y)$) such that $p(a,b \, | \,  x,y) = \mu(X_a^x \cap Y_b^y)$, $1 \le a,b \le k$ and $1 \le x,y \le n$. One denotes by $C_c(n,k)$ the set of $nk \times nk$ correlation matrices as above, where the probability space and the corresponding measurable partitions are allowed to vary. It can be shown that $C_c(n,k)$ is convex and closed. It is a polytope and its extreme points can be described.

In the quantum case, for each observable that Alice and Bob can measure, they each have a projection-valued measure (PVM) at their disposal. Recall that a PVM on a Hilbert space $H$ is a $k$-tuple $P_1, \dots, P_k$ of projections with sum $I_H$.

There are two quantum models for interpreting the physical separation of the labs:
\begin{itemize}
\item \emph{Tensor product} model, according to which there exist Hilbert spaces $H_A$ and $H_B$, PVMs $\{P_a^x\}_{1 \le a \le k}$ on $H_A$ and $\{Q_b^y\}_{1 \le b \le k}$ on $H_B$, for each $(x,y)$, and a unit vector $\psi \in H_A \otimes H_B$ such that
$$p(a,b \, | \, x,y) = \langle (P_a^x \otimes Q_b^y) \psi, \psi \rangle, \quad 1 \le a,b \le k, \; 1 \le x,y \le n.$$
\item \emph{Commutativity of observables} model, according to which there exist a single Hilbert space $H$, commuting PVMs 
$\{P_a^x\}_{1 \le a \le k}$  and $\{Q_b^y\}_{1 \le b \le k}$ on $H$, and a unit vector $\psi \in H$ such that
$$p(a,b \, | \, x,y) = \langle (P_a^x  Q_b^y) \psi, \psi \rangle,   \hspace{.65cm} 1 \le a,b \le k, \; 1 \le x,y \le n.$$
\end{itemize}
The corresponding sets of correlations are denoted by $C_{qs}(n,k)$ (in the tensor product formalism) and $C_{qc}(n,k)$ (in the commuting formalism). It can be shown, using a direct sum argument, that both sets are convex. We summarize the relations between these sets of correlations in the diagram below, where $C_{qs}^{\mathrm{fin}}(n,k)$ and $C_{qc}^{\mathrm{fin}}(n,k)$ are as above with the restriction, that the Hilbert spaces $H_A$, $H_B$, and $H$ are required to be finite-dimensional; and  $C_{qa}(n,k)$ denotes the closure of $C_{qs}(n,k)$. Customarily, $C_{qs}^{\mathrm{fin}}$ is also denoted by $C_q$. 
\begin{equation} \label{eq:correlations}
\begin{split}
\xymatrix@C-.4pc@R-.6pc{ & C_{qs}^{\mathrm{fin}}(n,k) \ar@{}[d]|-*[@]{\subseteq}&\overset{\text{Tsirelson}}{=}& C_{qc}^{\mathrm{fin}}(n,k) \ar@{}[d]|-*[@]{\subseteq}& \\
C_c(n, k)  \ar@{}[r]|-*[@]{\subseteq}  & C_{qs}(n, k)  \ar@{}[r]|-*[@]{\subseteq} &  C_{qa}(n, k) \ar@{}[r]|-*[@]{\subseteq}  & C_{qc}(n, k)
\ar@{}[r]|-*[@]{\subseteq}  &  M_{nk}([0,1]).}
\end{split}
\end{equation}

\noindent
The separation between classical and quantum correlations is witnessed by Bell's inequalities, \cite{Bell-1964},
which were verified experimentally to be violated starting early 1980s by Aspect and his team, with more and more sophisticated experiments being performed nowadays at labs around the world. Aspect, Clauser and Zeilinger were awarded the Nobel Prize in Physics in 2022 ``for experiments with entangled photons, establishing the violation of Bell inequalities and pioneering quantum information science''. 
Interestingly, the failure of the EPR ``hidden variable'' theory is also reflected in the fact that the universal constant in Grothendieck's inequality (in the real case) is strictly larger than 1, see \cite{Pisier:Grothendieck}. 

We now state the theorem of Fritz, \cite{Fritz:Tsirelson}, independently obtained by Junge and co-authors, \cite{JNPP-GSW:Tsirelson}, describing the correlation sets $C_{qa}(n,k)$ and $C_{qc}(n,k)$ in terms of states on tensor products of certain naturally associated group \Cs s. 
Let $k,n \ge 2$ be given and let $\Gamma_{k,n}$ be the free product of $n$ copies of the cyclic group $\Z/k$ of order $k$. It follows that its full
group \Cs{} $C^*(\Gamma_{k,n})$ is the unital ($C^*$-)free product of $n$ copies of $C^*(\Z/k) = \C^k$. Let $e_a^x \in C^*(\Gamma_{k,n})$ 
denote the $a$th minimal projection in the $x$th copy of $C^*(\Z/k)$, which then is equal to
$\mathrm{span}(\{e_a^x : 1 \le a \le k\})$. 

\noindent By the universal property of $\Gamma_{k,n}$ and the max tensor product, if $\{P_a^x\}_{1 \le a \le k}$ and $\{Q_b^y\}_{1 \le b \le k}$ are commuting PVMs on a Hilbert space $H$, for fixed $x,y$, there are representations $\varphi \colon C^*(\Gamma_{k,n}) \to B(H)$, $\psi \colon C^*(\Gamma_{k,n}) \otimes_\mathrm{max} C^*(\Gamma_{k,n}) \to B(H)$ such that
$$\varphi(e_a^x) = P_a^x, \qquad \psi(e_a^x \otimes e_b^y) = P_a^xQ_b^y, \qquad 1 \le a,b \le k.$$

For a unital \Cs{} $\cA$, let $S(\cA)$ denote the convex compact set of all its states.

\begin{theorem}[Fritz, Junge et al] \label{thm:Fritz}
Let $n, k \ge 2$ be given. Then
\begin{enumerate}
\item $C_{qa}(n,k) = \Big\{ \big[\rho(e_a^x \otimes e_b^y)\big]_{a,b;x,y} : \rho \in S(C^*(\Gamma_{k,n}) \otimes_{\mathrm{min}} C^*(\Gamma_{k,n}))\Big\}$,
\item $C_{qc}(n,k) = \Big\{\big[\rho(e_a^x \otimes e_b^y)\big]_{a,b;x,y} : \rho \in S(C^*(\Gamma_{k,n}) \otimes_\mathrm{max} C^*(\Gamma_{k,n}))\Big\}$.
\end{enumerate}
\end{theorem}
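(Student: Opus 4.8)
\noindent Throughout, set $A := C^*(\Gamma_{k,n})$. The plan is to establish each of the two identities by a pair of inclusions, using only the universal property of $A$ as a $C^*$-free product of copies of $C^*(\Z/k) = \C^k$ together with the universal properties of the minimal and maximal $C^*$-tensor norms. The underlying dictionary is that a single PVM $\{P_a^x\}_{a=1}^k$ on a Hilbert space $H$ is the same thing as a unital representation of $C^*(\Z/k)$ on $H$, so that a choice of one such PVM for each $x \in \{1,\dots,n\}$ amounts precisely to a unital representation $\varphi \colon A \to B(H)$ with $\varphi(e_a^x) = P_a^x$, and conversely.

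I would dispose of (ii) first, as it needs no closure. For the inclusion ``$\subseteq$'', given commuting PVMs $\{P_a^x\}$ and $\{Q_b^y\}$ on one Hilbert space $H$ and a unit vector $\psi \in H$, the two associated representations $\varphi, \varphi' \colon A \to B(H)$ have commuting ranges, so by the universal property of $\otimes_{\mathrm{max}}$ they induce a representation $\pi \colon A \otimes_{\mathrm{max}} A \to B(H)$ with $\pi(e_a^x \otimes e_b^y) = P_a^x Q_b^y$, and then $\rho := \langle \pi(\,\cdot\,)\psi, \psi\rangle$ is a state with $\rho(e_a^x \otimes e_b^y) = p(a,b\,|\,x,y)$. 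For ``$\supseteq$'', given a state $\rho$ on $A \otimes_{\mathrm{max}} A$, pass to a GNS triple $(\pi_\rho, H_\rho, \xi_\rho)$ and set $P_a^x := \pi_\rho(e_a^x \otimes 1)$, $Q_b^y := \pi_\rho(1 \otimes e_b^y)$; these are commuting PVMs on $H_\rho$ with $\langle P_a^x Q_b^y \xi_\rho, \xi_\rho\rangle = \rho(e_a^x \otimes e_b^y)$, which exhibits the matrix in $C_{qc}(n,k)$. So (ii) is bookkeeping once the universal properties are invoked.

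For (i), the inclusion $C_{qs}(n,k) \subseteq \{\,[\rho(e_a^x \otimes e_b^y)] : \rho \in S(A \otimes_{\mathrm{min}} A)\,\}$ goes the same way: PVMs on $H_A$ and on $H_B$ give representations $\varphi_A, \varphi_B$ of $A$, which tensor to $\varphi_A \otimes \varphi_B \colon A \otimes_{\mathrm{min}} A \to B(H_A) \otimes_{\mathrm{min}} B(H_B) \subseteq B(H_A \otimes H_B)$ --- precisely the feature built into the minimal/spatial tensor norm --- and the vector functional at $\psi \in H_A \otimes H_B$ is the required state. The right-hand set is the image of the weak-$*$ compact set $S(A \otimes_{\mathrm{min}} A)$ under the continuous map $\rho \mapsto [\rho(e_a^x \otimes e_b^y)]$, hence closed, so it also contains $C_{qa}(n,k) = \overline{C_{qs}(n,k)}$.

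The reverse inclusion in (i) is the step I expect to be the real obstacle: a general representation of a minimal tensor product does \emph{not} split as a tensor product of two representations, so the GNS argument of (ii) is unavailable. My plan is instead to fix faithful representations $A \subseteq B(H_A)$ and $A \subseteq B(H_B)$, yielding an isometric inclusion $A \otimes_{\mathrm{min}} A \subseteq B(H_A) \otimes_{\mathrm{min}} B(H_B) \subseteq B(H_A \otimes H_B)$; extend a given state $\rho$ on $A \otimes_{\mathrm{min}} A$ by Hahn--Banach to a state $\tilde\rho$ on $B(H_A \otimes H_B)$; and use that the convex hull of the vector states on $B(H_A \otimes H_B)$ is weak-$*$ dense in its full state space (a standard Hahn--Banach separation argument, since $\sup\{\langle x\xi,\xi\rangle : \|\xi\| = 1\} = \sup\{\sigma(x) : \sigma \text{ a state}\}$ for $x = x^*$) to write $\tilde\rho = \lim_\alpha \tilde\rho_\alpha$ weak-$*$ with each $\tilde\rho_\alpha$ a finite convex combination of vector states. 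Restricting, $\rho_\alpha := \tilde\rho_\alpha|_{A \otimes_{\mathrm{min}} A} \to \rho$ weak-$*$, and each $[\rho_\alpha(e_a^x \otimes e_b^y)]$ is a finite convex combination of vector-state correlations, hence lies in $C_{qs}(n,k)$, using its convexity (which, as the text recalls, comes from a direct-sum construction). Passing to the limit puts $[\rho(e_a^x \otimes e_b^y)]$ in $\overline{C_{qs}(n,k)} = C_{qa}(n,k)$ and finishes (i). The two points I would be careful about are the isometric inclusion $A \otimes_{\mathrm{min}} A \hookrightarrow B(H_A \otimes H_B)$ (independence of the spatial norm from the chosen faithful representations) and the weak-$*$ density of the vector states; everything else is routine.
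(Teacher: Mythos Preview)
The paper does not supply its own proof of this theorem; it is stated as a result of Fritz and of Junge et al, with citations, and the paper then moves on to prove Proposition~\ref{prop:dense} and Theorem~\ref{thm:Tsirelson} instead. So there is no in-paper argument to compare against.

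That said, your proposal is correct and is essentially the argument one finds in the cited sources. Part (ii) is indeed pure bookkeeping once the universal properties of $A = C^*(\Gamma_{k,n})$ and of $\otimes_{\max}$ are in hand; the GNS step for ``$\supseteq$'' is exactly right. For (i), your ``$\subseteq$'' is immediate, and your observation that the right-hand side is compact (continuous image of the weak-$*$ compact $S(A \otimes_{\min} A)$) is the clean way to absorb the closure. For ``$\supseteq$'' you correctly identify the obstruction---GNS representations of $A \otimes_{\min} A$ need not split---and your workaround via a fixed faithful embedding $A \otimes_{\min} A \hookrightarrow B(H_A \otimes H_B)$, Hahn--Banach extension, and weak-$*$ approximation by convex combinations of vector states is the standard route. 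The two technical points you flag (independence of the spatial norm from the chosen faithful representations, and weak-$*$ density of convex combinations of vector states in $S(B(H))$) are both classical, and the appeal to convexity of $C_{qs}(n,k)$ via direct sums is exactly what is needed to handle the convex combinations. Nothing is missing.
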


\noindent 
For completeness of the exposition, we include the quite short and interesting proofs, using $C^*$-techniques, of Tsirelson's theorem and the density of the inclusion $C^{\mathrm{fin}}_{qs}(n,k) \subseteq C_{qs}(n,k)$, cf.\ \eqref{eq:correlations}.
The \Cs{} $C^*(\Gamma_{k,n})$ is residually finite-dimensional (RFD), since it is the unital free product of $n$ copies of the finite-dimensional \Cs{} $C^*(\Z/k)$, cf.\ \cite{Exel-Loring:RFD}.

\begin{proposition} \label{prop:dense}
$C^{\mathrm{fin}}_{qs}(n,k)$ is dense in $C_{qs}(n,k)$, for all $n, k\geq 2$.
\end{proposition}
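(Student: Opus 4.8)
The plan is to realize every element of $C_{qs}(n,k)$ as (the relevant part of) a state on the minimal tensor product $C^*(\Gamma_{k,n})\otimes_{\mathrm{min}}C^*(\Gamma_{k,n})$, and then to exploit residual finite-dimensionality of $C^*(\Gamma_{k,n})$ to approximate arbitrary states, in the weak-$\ast$ topology, by vector states of finite-dimensional representations, which are precisely the states that give rise to $C^{\mathrm{fin}}_{qs}(n,k)$. Write $A=C^*(\Gamma_{k,n})$. First I would note that a PVM $\{P^x_a\}_{1\le a\le k}$ on a Hilbert space $H_A$ is the same datum as a unital $\ast$-representation of $C^*(\Z/k)=\C^k$ on $H_A$, so $n$ such PVMs assemble, by the universal property of the free product, into a representation $\sigma_A\colon A\to B(H_A)$ with $\sigma_A(e^x_a)=P^x_a$; likewise $\{Q^y_b\}$ yields $\sigma_B\colon A\to B(H_B)$. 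Then $\sigma_A\otimes\sigma_B$ is a representation of $A\otimes_{\mathrm{min}}A$ on $H_A\otimes H_B$, and for a given $p\in C_{qs}(n,k)$ with implementing unit vector $\psi$ the vector state $\rho=\langle(\sigma_A\otimes\sigma_B)(\,\cdot\,)\psi,\psi\rangle\in S(A\otimes_{\mathrm{min}}A)$ satisfies $\rho(e^x_a\otimes e^y_b)=p(a,b\,|\,x,y)$. (This is consistent with Theorem~\ref{thm:Fritz}(i), which moreover identifies $C_{qa}(n,k)=\overline{C_{qs}(n,k)}$ with \emph{all} such states, although we will not need that here.)

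Next I would invoke the RFD property of $A$ (recorded above): fix a separating family of finite-dimensional representations $\pi_\lambda\colon A\to M_{n_\lambda}(\C)$, so that $\Pi:=\bigoplus_\lambda\pi_\lambda$ is faithful. By the defining property of the minimal tensor norm, $\Pi\otimes\Pi$ is then a faithful representation of $A\otimes_{\mathrm{min}}A$, and it decomposes as the block-diagonal direct sum $\bigoplus_{\lambda,\mu}(\pi_\lambda\otimes\pi_\mu)$ of finite-dimensional representations. Let $S_0\subseteq S(A\otimes_{\mathrm{min}}A)$ be the set of vector states of the finite sub-sums $\bigoplus_{(\lambda,\mu)\in F}(\pi_\lambda\otimes\pi_\mu)$, over finite index sets $F$; since a direct sum of two such representations is again of this form, $S_0$ is convex. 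The key point is that $S_0$ is weak-$\ast$ dense in $S(A\otimes_{\mathrm{min}}A)$: if not, the Hahn--Banach theorem would furnish a self-adjoint $z\in A\otimes_{\mathrm{min}}A$ and $t\in\R$ with $\psi(z)\le t$ for all $\psi\in S_0$ while $\rho_0(z)>t$ for some state $\rho_0$; but $\sup_{\psi\in S_0}\psi(z)=\sup_{\lambda,\mu}\lambda_{\max}\big((\pi_\lambda\otimes\pi_\mu)(z)\big)=\max\spek\big((\Pi\otimes\Pi)(z)\big)=\max\spek(z)\ge\rho_0(z)$, where the penultimate equality uses that $\Pi\otimes\Pi$ is isometric, a contradiction.

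Finally I would read off what a state in $S_0$ computes on the coordinates $e^x_a\otimes e^y_b$. If $\zeta=\bigoplus_{(\lambda,\mu)\in F}\zeta_{\lambda,\mu}$ is a unit vector in $\bigoplus_{(\lambda,\mu)\in F}(\C^{n_\lambda}\otimes\C^{n_\mu})$, then the associated matrix $\big[\langle(\Pi\otimes\Pi)(e^x_a\otimes e^y_b)\zeta,\zeta\rangle\big]_{a,b;x,y}$ equals $\sum_{(\lambda,\mu)\in F}\|\zeta_{\lambda,\mu}\|^2\,p_{\lambda,\mu}$, where (for $\zeta_{\lambda,\mu}\neq0$) $p_{\lambda,\mu}\in C^{\mathrm{fin}}_{qs}(n,k)$ is implemented by the finite-dimensional spaces $\C^{n_\lambda}$, $\C^{n_\mu}$, the PVMs $\pi_\lambda(e^x_a)$ and $\pi_\mu(e^y_b)$, and the unit vector $\zeta_{\lambda,\mu}/\|\zeta_{\lambda,\mu}\|$. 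Because $C^{\mathrm{fin}}_{qs}(n,k)$ is convex (the very same direct-sum argument used for $C_{qs}(n,k)$ applies verbatim with finite-dimensional Hilbert spaces), this convex combination again lies in $C^{\mathrm{fin}}_{qs}(n,k)$. Hence the correlation matrix of every state in $S_0$ lies in $C^{\mathrm{fin}}_{qs}(n,k)$; combining this with the weak-$\ast$ density of $S_0$ and the weak-$\ast$-to-norm continuity of $\rho\mapsto\big[\rho(e^x_a\otimes e^y_b)\big]_{a,b;x,y}$ — only the finitely many coordinates $e^x_a\otimes e^y_b$ are involved — we conclude that the correlation matrix of \emph{every} state of $A\otimes_{\mathrm{min}}A$, and in particular every $p\in C_{qs}(n,k)$, lies in $\overline{C^{\mathrm{fin}}_{qs}(n,k)}$, which is the assertion.

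The step I expect to be the crux is the weak-$\ast$ density of $S_0$ in $S(A\otimes_{\mathrm{min}}A)$ — equivalently, the fact that residual finite-dimensionality propagates to the minimal tensor product and that the minimal tensor norm is computed by finite-dimensional representations; the remaining ingredients (identifying PVMs with representations of $\Gamma_{k,n}$, the block-diagonal decomposition, and the routine direct-sum manipulations showing $C^{\mathrm{fin}}_{qs}(n,k)$ is convex and absorbs the relevant convex combinations) are bookkeeping.
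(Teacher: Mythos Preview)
Your proof is correct and follows essentially the same route as the paper: both use the RFD property of $C^*(\Gamma_{k,n})$ to build a faithful block-diagonal representation $\Pi\otimes\Pi$ of the minimal tensor product, show that vector states supported on finite blocks yield correlations in $C^{\mathrm{fin}}_{qs}(n,k)$, and then use convexity of $C^{\mathrm{fin}}_{qs}(n,k)$ together with a density statement for such vector states. The only substantive difference is in how the density step is justified: the paper restricts attention to the finite-dimensional operator system $\cM=\Span\{e_a^x\otimes e_b^y\}$ and invokes \cite[Corollary 4.3.10]{KadRin:VolI} to conclude that $S(\cM)$ is the closed convex hull of the vector states $\rho_\xi|_\cM$, then uses density of the truncated Hilbert spaces $H^{(N)}\otimes H^{(N)}$; you instead give a direct Hahn--Banach/spectral argument that the finite-block vector states are weak-$*$ dense in $S(A\otimes_{\mathrm{min}}A)$. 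Your argument is slightly more self-contained, while the paper's is a touch shorter by outsourcing to a standard reference. One cosmetic point: your claim that $S_0$ is convex is literally true only if you allow multiplicity in the finite sub-sums (when $F_1\cap F_2\neq\varnothing$), but this is harmless since the Hahn--Banach step in any case only yields $\overline{\conv}(S_0)^{w^*}=S(A\otimes_{\mathrm{min}}A)$, and you already use convexity of $C^{\mathrm{fin}}_{qs}(n,k)$ in the final step.
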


\begin{proof} As $C^{\mathrm{fin}}_{qs}(n,k)$ by definition is a subset of $C_{qs}(n,k)$, it suffices to show that it is dense in $C_{qa}(n,k)$. We use Theorem~\ref{thm:Fritz} and the RFD property of $C^*(\Gamma_{k,n})$ to prove this. 

Let $\{\pi_r\}_{r \ge 1}$ be a separating family of representations of $C^*(\Gamma_{k,n})$ on finite- dimensional Hilbert spaces $H_r$. Set $H = \bigoplus_{r \ge 1} H_r$ and obtain a faithful representation $\pi = \bigoplus \pi_r$ of $C^*(\Gamma_{k,n})$ on $H$, which in turn gives a faithful representation $\pi \otimes \pi$ of $C^*(\Gamma_{k,n}) \otimes_{\mathrm{min}} C^*(\Gamma_{k,n})$ on $H \otimes H$. 
Identify $H^{(N)} = \bigoplus_{r=1}^N H_n$ with a subspace of $H$, and note that $H_0 :=\bigcup_{N \ge 1} H^{(N)} $ is dense in $H$. Let $\pi^{(N)} = \bigoplus_{r=1}^N \pi_r$. Consider the following finite-dimensional operator subspace of $C^*(\Gamma_{k,n}) \otimes_{\mathrm{min}} C^*(\Gamma_{k,n})$:
$$\cM = \mathrm{span}\{e_a^x \otimes e_b^y :1 \le a,b \le k, \, 1 \le x,y \le n\} $$
By restricting states on $C^*(\Gamma_{k,n}) \otimes_{\mathrm{min}} C^*(\Gamma_{k,n})$ to states on $\cM$, it follows from Theorem~\ref{thm:Fritz}  that $C_{qa}(n,k) = \{ \big[\rho(e_a^x \otimes e_b^y)\big]_{a,b;x,y} : \rho \in S(\cM)\}$. For  $\xi \in H \otimes H$, let $\omega_\xi$ be the corresponding vector state on $B(H \otimes H)$, and consider the state $\rho_\xi = \omega_\xi \circ (\pi \otimes \pi)$ on  $C^*(\Gamma_{k,n}) \otimes_{\mathrm{min}} C^*(\Gamma_{k,n})$. By \cite[Corollary 4.3.10]{KadRin:VolI}, 
$$S(\cM) = \overline{\conv} \{ \rho_\xi|_\cM : \xi \in H \otimes H, \, \|\xi\| =1\}.$$
Hence 
$$C_{qa}(n,k) = \overline{\conv}\{\big[\rho_\xi(e_a^x \otimes e_b^y)\big]_{a,b;x,y} : \xi \in H \otimes H\big\}.$$
If $\xi \in H^{(N)} \otimes H^{(N)}$, then
$\rho_\xi = \omega_\xi \circ (\pi^{(N)} \otimes \pi^{(N)})$, from which it follows that
$\big[\rho_\xi(e_a^x \otimes e_b^y)\big]_{a,b;x,y}$ belongs to $C^{\mathrm{fin}}_{qs}(n,k)$. Finally, as $H_0 \otimes H_0$ is dense in $H \otimes H$ and $C^{\mathrm{fin}}_{qs}(n,k)$ is convex, we obtain the claim of the proposition. 
\end{proof}

\begin{theorem}[Tsirelson] \label{thm:Tsirelson}
 $C^{\mathrm{fin}}_{qs}(n,k) = C^{\mathrm{fin}}_{qc}(n,k)$, for all $n,k \ge 2$.
\end{theorem}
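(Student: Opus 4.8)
The plan is to prove the two inclusions separately. The inclusion $C^{\mathrm{fin}}_{qs}(n,k) \subseteq C^{\mathrm{fin}}_{qc}(n,k)$ is immediate: given finite-dimensional Hilbert spaces $H_A, H_B$, PVMs $\{P_a^x\}$ on $H_A$ and $\{Q_b^y\}$ on $H_B$, and a unit vector $\psi \in H_A \otimes H_B$, the families $\{P_a^x \otimes 1_{H_B}\}$ and $\{1_{H_A} \otimes Q_b^y\}$ are commuting PVMs on the finite-dimensional Hilbert space $H := H_A \otimes H_B$ realizing the same correlation matrix, so that matrix lies in $C^{\mathrm{fin}}_{qc}(n,k)$.

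For the reverse (and substantial) inclusion $C^{\mathrm{fin}}_{qc}(n,k) \subseteq C^{\mathrm{fin}}_{qs}(n,k)$, I would start from a finite-dimensional Hilbert space $H$, commuting PVMs $\{P_a^x\}_{1 \le a \le k}$ and $\{Q_b^y\}_{1 \le b \le k}$ on $H$, and a unit vector $\psi \in H$, and invoke the structure theory of finite-dimensional $C^*$-algebras and their commutants. Let $\cA \subseteq B(H)$ be the $C^*$-algebra generated by $\{P_a^x : 1 \le a \le k, \, 1 \le x \le n\}$; it is unital, since $\sum_a P_a^1 = 1_H$. Hence there is a unitary identification $H \cong \bigoplus_{i=1}^\ell H_i \otimes K_i$ under which $\cA = \bigoplus_{i=1}^\ell B(H_i) \otimes 1_{K_i}$, and consequently $\cA' = \bigoplus_{i=1}^\ell 1_{H_i} \otimes B(K_i)$. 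Since the two PVMs commute, the $C^*$-algebra generated by $\{Q_b^y\}$ sits inside $\cA'$; writing $P_a^x = \bigoplus_i P_{a,i}^x \otimes 1_{K_i}$ and $Q_b^y = \bigoplus_i 1_{H_i} \otimes Q_{b,i}^y$, one checks that $\{P_{a,i}^x\}_a$ is a PVM on $H_i$ and $\{Q_{b,i}^y\}_b$ is a PVM on $K_i$, for every $i$.

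The last step is to repackage this data into a single tensor-product model. I would set $H_A := \bigoplus_i H_i$ and $H_B := \bigoplus_i K_i$ (both finite-dimensional), and $\widehat{P}_a^x := \bigoplus_i P_{a,i}^x$, $\widehat{Q}_b^y := \bigoplus_i Q_{b,i}^y$, which are PVMs on $H_A$ and $H_B$ respectively. There is a canonical isometric embedding $H = \bigoplus_i H_i \otimes K_i \hookrightarrow \bigoplus_{i,j} H_i \otimes K_j = H_A \otimes H_B$ identifying $H$ with the summand $i = j$, and this subspace is invariant under both $\widehat{P}_a^x \otimes 1_{H_B}$ and $1_{H_A} \otimes \widehat{Q}_b^y$, with the respective restrictions equal to $P_a^x$ and $Q_b^y$. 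Consequently, viewing $\psi$ as a unit vector in $H_A \otimes H_B$,
$$\langle (\widehat{P}_a^x \otimes \widehat{Q}_b^y)\psi, \psi \rangle = \langle P_a^x Q_b^y \psi, \psi \rangle = p(a,b \, | \, x,y),$$
so the correlation matrix belongs to $C^{\mathrm{fin}}_{qs}(n,k)$, which finishes the proof.

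The only genuine obstacle I anticipate is applying the finite-dimensional Wedderburn/commutant decomposition with the right bookkeeping: using that $\cA$ is unital so the decomposition of $H$ is exhaustive, and tracking how the single vector $\psi$ is distributed across the $\ell$ summands. Note that this route needs neither Theorem~\ref{thm:Fritz} nor any convexity argument, although one could alternatively treat the $\ell$ summands one at a time and then appeal to convexity of $C^{\mathrm{fin}}_{qs}(n,k)$ (established by a direct-sum argument) to conclude.
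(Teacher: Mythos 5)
Your argument is correct, and both inclusions are handled properly; but for the substantial inclusion $C^{\mathrm{fin}}_{qc}(n,k) \subseteq C^{\mathrm{fin}}_{qs}(n,k)$ you take a genuinely different route from the paper. The paper's proof stays entirely at the level of states: it uses that the finite-dimensional (hence nuclear) commuting algebras $\cA_1 = C^*(\{P_a^x\})$ and $\cA_2 = C^*(\{Q_b^y\})$ give a surjective unital $^*$-homomorphism $\cA_1 \otimes \cA_2 \to C^*(\cA_1 \cup \cA_2)$, pulls the vector state $\omega_\xi$ back to a state $\rho$ on $\cA_1 \otimes \cA_2 \subseteq B(H \otimes H)$, decomposes $\rho$ as a convex combination of vector states, and concludes by convexity of $C^{\mathrm{fin}}_{qs}(n,k)$; this fits the paper's overall theme of describing correlation sets via states on tensor products (Theorem~\ref{thm:Fritz}). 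You instead invoke the Wedderburn/commutant decomposition $H \cong \bigoplus_i H_i \otimes K_i$ with $\cA = \bigoplus_i B(H_i) \otimes 1_{K_i}$ and $\cA' = \bigoplus_i 1_{H_i} \otimes B(K_i)$, and then embed $H$ as the diagonal summand of $H_A \otimes H_B = \bigoplus_{i,j} H_i \otimes K_j$; the key observations that this subspace is invariant under $\widehat{P}_a^x \otimes 1$ and $1 \otimes \widehat{Q}_b^y$, with restrictions $P_a^x$ and $Q_b^y$, are correct, as is your use of $\sum_a P_a^1 = 1_H$ to ensure the decomposition of $H$ is exhaustive. What your approach buys is a single-vector tensor-product realization of the correlation (no convex combination needed, hence no appeal to convexity of $C^{\mathrm{fin}}_{qs}(n,k)$) together with explicit dimension control $\dim H_A, \dim H_B \le \dim H$; what the paper's approach buys is brevity and a formulation that generalizes directly to the infinite-dimensional $C^*$-algebraic setting used in the rest of Section~\ref{sec:Tsirelson}.
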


\begin{proof} It is clear that $C^{\mathrm{fin}}_{qs}(n,k) \subseteq C^{\mathrm{fin}}_{qc}(n,k)$. To prove the reverse inclusion, take $T \in C^{\mathrm{fin}}_{qc}(n,k)$, and write $T =[ \langle P_a^x Q_b^y \xi, \xi \rangle]$, for some commuting PVMs $\{P_a^x\}$ and $\{Q_b^y\}$ on a finite dimensional Hilbert space $H$ and some unit vector $\xi \in H$. Let $\cA_1, \cA_2, \cA$ be the sub-\Cs s of $B(H)$ generated by 
$$\mathcal{P} := \{P_a^x : 1 \le x \le n, 1 \le a \le k\}, \quad \mathcal{Q} =  \{Q_b^y : 1 \le y \le n, 1 \le b \le k\}, \quad \mathcal{P} \cup \mathcal{Q},$$
respectively.  As the \Cs s $\cA_1, \cA_2$ commute and are nuclear (being finite-dimensional) we have a unital surjective $^*$-homomorphism $\psi \colon \cA_1 \otimes \cA_2 \to \cA$, satisfying $\psi(a \otimes b) = ab$, for $a \in \cA_1$ and $b \in \cA_2$. 

Consider the state $\rho = \omega_\xi \circ \psi$ on $\cA_1 \otimes \cA_2$, where $\omega_\xi$ is the vector state associated with the unit vector $\xi$, and note that $T = [\rho(P_a^x \otimes Q_b^y)]$. 
Since $\cA_1 \otimes \cA_2$ is a finite-dimensional \Cs{} acting faithfully on the Hilbert space $H \otimes H$, each state on $\cA_1 \otimes \cA_2$ is a convex combination of vector states. Hence we can write $\rho$ as a convex combination $\sum_{j =1}^N t_j \, \omega_{\xi_j}$ of vector states $\omega_{\xi_j}$ corresponding to unit vectors $\xi_j \in H \otimes H$. Thus
$$T = [\rho(P_a^x \otimes Q_b^y)] = \sum_{j=1}^N t_j \, [\langle (P_a^x \otimes Q_b^y) \xi_j, \xi_j \rangle] \in C_{qs}^\mathrm{fin}(n,k),$$
by convexity of the set $C_{qs}^\mathrm{fin}(n,k)$. 
\end{proof}

\noindent The extension of this result to the case when infinite-dimensional Hilbert spaces are considered, is known as the (strong) Tsirelson conjecture:

\begin{conjecture}[Tsirelson] One has $C_{qa}(n,k) = C_{qc}(n,k)$, for all $n,k \ge 2$; or, equivalently,  $C^{\mathrm{fin}}_{qc}(n,k)$ is dense in $C_{qc}(n,k)$, for all $n,k \ge 2$.
\end{conjecture}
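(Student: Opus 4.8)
The plan would be to compare the two correlation sets through their $C^*$-algebraic descriptions in Theorem~\ref{thm:Fritz}. One inclusion is free: the diagram \eqref{eq:correlations} already records $C_{qa}(n,k) \subseteq C_{qc}(n,k)$, and this is immediate since $C_{qc}(n,k)$ is the continuous (weak-$*$ to norm) image of the compact state space $S(C^*(\Gamma_{k,n}) \otimes_{\mathrm{max}} C^*(\Gamma_{k,n}))$, hence closed, while $C_{qa}(n,k)$ is by definition the closure of $C_{qs}(n,k) \subseteq C_{qc}(n,k)$. So the whole content lies in the reverse inclusion. First I would set $A = C^*(\Gamma_{k,n})$ and observe, via Theorem~\ref{thm:Fritz}, that $C_{qa}(n,k)$ and $C_{qc}(n,k)$ are the images of $S(A \otimes_{\mathrm{min}} A)$ and $S(A \otimes_{\mathrm{max}} A)$ under the single evaluation map $\rho \mapsto [\rho(e_a^x \otimes e_b^y)]$. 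Since this map depends only on the restriction of $\rho$ to the finite-dimensional operator system $\cM = \mathrm{span}\{e_a^x \otimes e_b^y\}$, the problem reduces to: does every state on $A \otimes_{\mathrm{max}} A$ agree on $\cM$ with some state on $A \otimes_{\mathrm{min}} A$?

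The key step would be to upgrade this to the much cleaner sufficient condition that the canonical quotient $A \otimes_{\mathrm{max}} A \to A \otimes_{\mathrm{min}} A$ is isometric, i.e.\ that the minimal and maximal $C^*$-norms coincide on the algebraic tensor product $A \odot A$. Granting this, the induced pullback identifies $S(A \otimes_{\mathrm{min}} A)$ with $S(A \otimes_{\mathrm{max}} A)$, so the two evaluation images --- and hence $C_{qa}(n,k)$ and $C_{qc}(n,k)$ --- coincide. In this way the entire conjecture would collapse to a single tensor-norm coincidence for the residually finite-dimensional algebra $A = C^*(\Gamma_{k,n})$, a free product of $n$ copies of $\C^k$.

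This is precisely where the plan --- and, in fact, any plan --- must halt. The coincidence $A \otimes_{\mathrm{min}} A = A \otimes_{\mathrm{max}} A$ for these free-product group $C^*$-algebras is exactly Kirchberg's reformulation of the Connes Embedding Problem: by Kirchberg's theorem \cite{Kir:CEP} the uniqueness of the $C^*$-tensor norm on $C^*(\mathbb{F}_\infty) \odot C^*(\mathbb{F}_\infty)$ is equivalent to CEP, and combined with the characterization of Theorem~\ref{thm:Fritz} this equivalence transports to $C^*(\Gamma_{k,n})$. Thus the final step is not a technical hurdle to be pushed through but the literal assertion that CEP holds. The step I would expect to be hardest is therefore exactly the one that turns out to be \emph{impossible}: since CEP has a negative solution via MIP$^*$=RE, \cite{JNVWY:MIP*=RE}, the two norms differ for suitable $n,k$, and consequently $C_{qa}(n,k) \subsetneq C_{qc}(n,k)$ for those parameters. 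In short, the strategy above is the natural --- indeed essentially the only --- route to the conjecture, and it shows that the conjecture as stated is false, the obstruction being precisely the failure of CEP.
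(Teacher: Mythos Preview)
The statement is a \emph{conjecture}, not a theorem, and the paper never attempts to prove it --- as you yourself conclude, it is now known to be false. The only thing the paper justifies here is the word ``equivalently'' inside the statement: the equivalence of the two formulations is established in the sentence immediately following the conjecture, as an immediate consequence of Proposition~\ref{prop:dense} (giving $\overline{C^{\mathrm{fin}}_{qs}(n,k)} = C_{qa}(n,k)$) and Tsirelson's Theorem~\ref{thm:Tsirelson} (giving $C^{\mathrm{fin}}_{qs}(n,k) = C^{\mathrm{fin}}_{qc}(n,k)$). Combining these, the closure of $C^{\mathrm{fin}}_{qc}(n,k)$ is exactly $C_{qa}(n,k)$, so $C^{\mathrm{fin}}_{qc}(n,k)$ is dense in $C_{qc}(n,k)$ if and only if $C_{qa}(n,k) = C_{qc}(n,k)$. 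Your write-up does not address this equivalence at all.

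Your proposal instead tries to prove the first formulation outright and, via Theorem~\ref{thm:Fritz}, reduces it to the tensor-norm coincidence for $A = C^*(\Gamma_{k,n})$. That reduction is correct as a \emph{sufficient} condition, and you rightly identify it with CEP. But there is a logical gap in your last step: from the failure of the norm coincidence you conclude $C_{qa}(n,k) \subsetneq C_{qc}(n,k)$, which requires the \emph{converse} implication --- that equality of the correlation sets (for all $n,k$) forces equality of the tensor norms. That direction is not a consequence of Theorem~\ref{thm:Fritz}; it is precisely Ozawa's contribution, recorded as (iii)~$\Rightarrow$~(ii) in Theorem~\ref{thm:equiv}. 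With that cited your narrative is correct, but it remains an explanation of why the conjecture fails, not a proof of the equivalence the paper actually supplies.
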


\noindent It is an immediate consequence of Proposition~\ref{prop:dense} and Theorem~\ref{thm:Tsirelson} that the two formulations of Tsirelson's conjecture are equivalent, cf.\ \eqref{eq:correlations}. The first formulation is about interchangeability of the tensor product model and the commuting model for two separated physical systems, while the second formulation asks if the commuting model  in infinite dimensions can be approximated by the commuting model in finite dimensions.

In Kirchberg's seminal paper, \cite{Kir:CEP}, already mentioned in Section~\ref{sec:CEP}, CEP is recast in several ways. The key objects in Kirchberg's paper are the two \Cs s: $B(H)$, into which every separable \Cs{} embeds, and the universal \Cs{} $C^*(\mathbb{F}_\infty)$ of the free group, which surjects onto every separable \Cs; and two properties of \Cs s: the (local) lifting property (L)LP and Lance's weak expectation property WEP. 

Kirchberg conjectured that every \Cs{} is QWEP, i.e., is a quotient of a \Cs{} with WEP, and he proved that this conjecture is equivalent to CEP having a positive answer. He further proves that $B(H)$ has WEP, that $C^*(\mathbb{F}_\infty)$ is LP (hence LLP), and that for every \Cs{} $\cA$, the following statements hold:
\begin{eqnarray*}
\cA \; \text{has LLP}  \; & \iff & \cA \otimes_{\mathrm{min}} B(H) = \cA \otimes_{\mathrm{max}} B(H), \\
\cA \;  \text{has WEP}  &\iff & \cA \otimes_{\mathrm{min}}  C^*(\mathbb{F}_\infty)= \cA \otimes_{\mathrm{max}} C^*(\mathbb{F}_\infty).
\end{eqnarray*}
In particular, $C^*(\mathbb{F}_\infty) \otimes_{\mathrm{min}} B(H) =C^*(\mathbb{F}_\infty) \otimes_{\mathrm{max}} B(H)$. It was left open by Kirchberg  if $B(H) \otimes_{\mathrm{min}} B(H) = B(H) \otimes_{\mathrm{max}} B(H)$, subsequently answered in the negative by Junge--Pisier, \cite{Junge-Pisier:B(H)} and Ozawa--Pisier, \cite{Ozawa-Pisier:B(H)}. He further proved the following fundamental result.

\begin{theorem}[Kirchberg] \label{thm:Kir2}
$C^*(\mathbb{F}_\infty) \otimes_{\mathrm{min}}  C^*(\mathbb{F}_\infty) =C^*(\mathbb{F}_\infty) \otimes_{\mathrm{max}} C^*(\mathbb{F}_\infty)$ if and only if CEP has a positive answer.
\end{theorem}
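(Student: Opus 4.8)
\noindent The plan is to reduce the two-sided tensor identity to a single property of $C^*(\mathbb{F}_\infty)$ --- namely WEP --- and then to route that property through Kirchberg's QWEP conjecture to CEP. \emph{Step 1 (reduction to WEP).} Apply the tensor characterization of WEP recalled above to the particular algebra $\cA = C^*(\mathbb{F}_\infty)$: it states precisely that $C^*(\mathbb{F}_\infty) \otimes_{\mathrm{min}} C^*(\mathbb{F}_\infty) = C^*(\mathbb{F}_\infty) \otimes_{\mathrm{max}} C^*(\mathbb{F}_\infty)$ if and only if $C^*(\mathbb{F}_\infty)$ has WEP. Thus the theorem is equivalent to the assertion that $C^*(\mathbb{F}_\infty)$ has WEP exactly when CEP has a positive answer, and it is this reformulation I would establish.

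\emph{Step 2 (for $C^*(\mathbb{F}_\infty)$, WEP $=$ QWEP).} One implication is trivial. For the converse I would use that $C^*(\mathbb{F}_\infty)$ enjoys the (global) lifting property LP, together with the lemma: \emph{a C*-algebra $\cA$ that has LP and is QWEP has WEP.} To prove the lemma, write the QWEP algebra as $\cA = \cB/\cI$ with $\cB$ having WEP; applying LP to the identity map $\cA \to \cA$ produces a unital completely positive section $s \colon \cA \to \cB$ of the quotient map $q$. For an arbitrary C*-algebra $\cE$, minimal tensoring is functorial for u.c.p.\ maps, so $s \otimes \Id_\cE$ makes sense, and the pointwise identity $b \otimes e - s(q(b)) \otimes e = (b - s(q(b))) \otimes e$ together with $b - s(q(b)) \in \cI$ shows that the kernel of $q \otimes \Id_\cE \colon \cB \otimes_{\mathrm{min}} \cE \to \cA \otimes_{\mathrm{min}} \cE$ is exactly $\cI \otimes_{\mathrm{min}} \cE$; hence $\cA \otimes_{\mathrm{min}} \cE = (\cB \otimes_{\mathrm{min}} \cE)/(\cI \otimes_{\mathrm{min}} \cE)$. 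Taking $\cE = C^*(\mathbb{F}_\infty)$: since $\cB$ has WEP one has $\cB \otimes_{\mathrm{min}} C^*(\mathbb{F}_\infty) = \cB \otimes_{\mathrm{max}} C^*(\mathbb{F}_\infty)$, so the closed ideals $\cI \otimes_{\mathrm{min}} C^*(\mathbb{F}_\infty)$ and $\cI \otimes_{\mathrm{max}} C^*(\mathbb{F}_\infty)$ are identified inside this common algebra (using injectivity of the minimal, and exactness of the maximal, tensor product); combining this with the exactness of the maximal tensor product, which gives $\cA \otimes_{\mathrm{max}} C^*(\mathbb{F}_\infty) = (\cB \otimes_{\mathrm{max}} C^*(\mathbb{F}_\infty))/(\cI \otimes_{\mathrm{max}} C^*(\mathbb{F}_\infty))$, we conclude $\cA \otimes_{\mathrm{min}} C^*(\mathbb{F}_\infty) = \cA \otimes_{\mathrm{max}} C^*(\mathbb{F}_\infty)$, i.e.\ $\cA$ has WEP. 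Applying the lemma with $\cA = C^*(\mathbb{F}_\infty)$ finishes the step.

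\emph{Step 3 (for $C^*(\mathbb{F}_\infty)$, QWEP $\iff$ CEP).} Since $C^*(\mathbb{F}_\infty)$ surjects onto every separable C*-algebra and the class of QWEP C*-algebras is closed under quotients and inductive limits, $C^*(\mathbb{F}_\infty)$ being QWEP forces every C*-algebra to be QWEP, i.e.\ the QWEP conjecture holds; the converse inclusion is obvious. By Kirchberg's theorem, recorded earlier in this section, the QWEP conjecture is equivalent to a positive answer to CEP. Chaining Steps 1--3 proves the theorem. (For the direction WEP $\Rightarrow$ CEP one may also argue more concretely: the WEP of $C^*(\mathbb{F}_\infty)$ makes the C*-algebra underlying any separable tracial von Neumann algebra QWEP, and a separable tracial von Neumann algebra is QWEP precisely when it embeds trace-preservingly into $\cR^\omega$ --- this last equivalence being essentially the matricial-approximation content behind Proposition~\ref{prop:CEP} and Theorem~\ref{thm:Kir1}.)

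\emph{Where the difficulty sits.} The only genuinely new bookkeeping above is the lemma in Step 2, whose delicate point is the identification $\ker(q \otimes \Id_\cE) = \cI \otimes_{\mathrm{min}} \cE$: one must combine the u.c.p.\ section with the functoriality and injectivity of the minimal tensor product --- and note that the lifting property is genuinely needed, since ``QWEP $\Rightarrow$ WEP'' fails in general (already for non-injective von Neumann algebras such as $L(\mathbb{F}_\infty)$), so the LP of $C^*(\mathbb{F}_\infty)$ cannot be dispensed with. All the real depth is concentrated in the two quoted black boxes: Kirchberg's tensor characterizations of LLP and WEP, and the equivalence of the QWEP conjecture with CEP. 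The latter is where $\cR$ and $\cR^\omega$ enter, through the hyperfiniteness and matricial-approximation arguments sketched after Proposition~\ref{prop:CEP}. If one insisted on a self-contained proof avoiding the QWEP formalism, the main obstacle would become the implication CEP $\Rightarrow$ ($\|\cdot\|_{\mathrm{min}} = \|\cdot\|_{\mathrm{max}}$ on $C^*(\mathbb{F}_\infty) \odot C^*(\mathbb{F}_\infty)$), which amounts to approximating an arbitrary commuting pair of unitary tuples together with a state vector by data living on a tensor product of Hilbert spaces --- a Tsirelson-type passage that is elementary only in finite dimensions, cf.\ Theorem~\ref{thm:Tsirelson}, and that CEP makes available in general via $\cR^\omega$ (the vector not being a trace vector being the source of the additional work).
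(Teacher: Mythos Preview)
The paper does not give its own proof of this theorem: it is stated as Kirchberg's result from \cite{Kir:CEP}, immediately after the paragraph recording the tensor characterizations of LLP and WEP, the LP of $C^*(\mathbb{F}_\infty)$, and the equivalence of the QWEP conjecture with CEP. So there is no ``paper's proof'' to compare against; what one can assess is whether your argument correctly assembles the theorem from the black boxes the paper does quote.

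It does. Step~1 is exactly the displayed characterization $\cA$ has WEP $\iff \cA \otimes_{\min} C^*(\mathbb{F}_\infty) = \cA \otimes_{\max} C^*(\mathbb{F}_\infty)$ specialized to $\cA = C^*(\mathbb{F}_\infty)$. Step~3 uses only that QWEP passes to quotients and that $C^*(\mathbb{F}_\infty)$ surjects onto every separable \Cs, together with the paper's quoted equivalence ``QWEP conjecture $\iff$ CEP''. The only place you add content beyond the paper's citations is Step~2, the lemma ``LP $+$ QWEP $\Rightarrow$ WEP''. Your argument there is correct, but you should make one point explicit: the identity $b\otimes e - s(q(b))\otimes e \in \cI \otimes_{\min}\cE$ holds on elementary tensors, and to conclude $\ker(q\otimes\Id_\cE)\subseteq \cI\otimes_{\min}\cE$ you are implicitly using that the map $\Id_{\cB\otimes_{\min}\cE} - (s\otimes\Id_\cE)\circ(q\otimes\Id_\cE)$ is bounded (indeed of norm at most $2$, since $s$ is contractive as a ucp map) and hence carries all of $\cB\otimes_{\min}\cE$ into the closed ideal $\cI\otimes_{\min}\cE$; then $x\in\ker(q\otimes\Id_\cE)$ gives $x = (\Id - (s\otimes\Id)\circ(q\otimes\Id))(x)\in \cI\otimes_{\min}\cE$. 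With that continuity remark supplied, your chain of equivalences is complete and is the standard route to Theorem~\ref{thm:Kir2} via the ingredients the paper has already recorded.
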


\noindent Ozawa remarked in \cite[p.\ 23]{Ozawa:CEP} that one can replace the group $\mathbb{F}_\infty$ in Theorem~\ref{thm:Kir2} above with any other group that contains the free group $\mathbb{F}_2$ and whose full group \Cs{} has LLP. This proves that (i) and (ii) are equivalent in the following theorem, since each $C^*(\Gamma_{k,n})$ has the LLP and $\Gamma_{k,n}$ contains $\mathbb{F}_2$ whenever $(k,n) \ne (2,2)$.

\begin{theorem}[Kirchberg, Fritz, Junge et al, Ozawa] \label{thm:equiv}
The following statements are equivalent:
\begin{enumerate}
\item CEP has a positive answer,
\item $C^*(\Gamma_{k,n}) \otimes_{\mathrm{min}}   C^*(\Gamma_{k,n}) = C^*(\Gamma_{k,n}) \otimes_{\mathrm{max}}   C^*(\Gamma_{k,n})$, for all $k,n \ge 2$,
\item Tsirelson's conjecture is true.
\end{enumerate}
\end{theorem}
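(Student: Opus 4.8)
The plan is to establish the equivalences in Theorem~\ref{thm:equiv} by assembling the pieces already laid out in the excerpt, organizing them as a cycle or as a hub through which all three statements connect. The natural structure is: (i) $\Leftrightarrow$ (ii) follows from Theorem~\ref{thm:Kir2} together with Ozawa's remark; (ii) $\Leftrightarrow$ (iii) follows from Theorem~\ref{thm:Fritz} of Fritz and Junge et al.\ via a Hahn--Banach separation argument. So the proof is really two implications, each an \emph{iff}, glued at statement (ii).

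First I would treat (i) $\Leftrightarrow$ (ii). Kirchberg's Theorem~\ref{thm:Kir2} gives the equivalence of CEP with $C^*(\mathbb F_\infty) \otimes_{\min} C^*(\mathbb F_\infty) = C^*(\mathbb F_\infty) \otimes_{\max} C^*(\mathbb F_\infty)$. Ozawa's observation, quoted in the excerpt, is that $\mathbb F_\infty$ may be replaced by any group $G$ containing $\mathbb F_2$ with $C^*(G)$ having the LLP; the mechanism is that $C^*(G)$ then has the LLP (hence $C^*(G) \otimes_{\min} B(H) = C^*(G)\otimes_{\max} B(H)$ by Kirchberg's characterization), and containing $\mathbb F_2$ forces $C^*(G)$ to be sufficiently large that the min $=$ max tensor-square property for $C^*(G)$ is equivalent to CEP. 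One then checks the two hypotheses for $\Gamma_{k,n}$: the full group $C^*$-algebra $C^*(\Gamma_{k,n})$ is a unital free product of $n$ copies of $\C^k$, hence has the LLP (each $\C^k$ is nuclear, and the LLP passes to unital free products); and $\Gamma_{k,n}$, being a free product of $n\ge 2$ copies of $\Z/k$ with $(k,n)\neq(2,2)$, contains a copy of $\mathbb F_2$ (this is a standard Kurosh-type fact, with $(2,2)$ excluded because $\Z/2 * \Z/2$ is the infinite dihedral group, which is amenable). This yields (i) $\Leftrightarrow$ (ii), at least for $(k,n)\neq(2,2)$; the $(2,2)$ case of (ii) is then either trivial or subsumed, since $C^*(\Z/2)*C^*(\Z/2)$ is nuclear, making its min and max tensor squares automatically equal, so the quantifier ``for all $k,n\ge 2$'' in (ii) is unaffected.

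Next I would treat (ii) $\Leftrightarrow$ (iii). By the equivalent reformulation of Tsirelson's conjecture established just above (using Proposition~\ref{prop:dense} and Theorem~\ref{thm:Tsirelson}), statement (iii) is equivalent to $C_{qa}(n,k) = C_{qc}(n,k)$ for all $n,k\ge 2$. By Theorem~\ref{thm:Fritz}, $C_{qa}(n,k)$ is the image of $S(C^*(\Gamma_{k,n}) \otimes_{\min} C^*(\Gamma_{k,n}))$ under the fixed linear map $\rho \mapsto [\rho(e_a^x\otimes e_b^y)]$, and $C_{qc}(n,k)$ is the image of $S(C^*(\Gamma_{k,n})\otimes_{\max}C^*(\Gamma_{k,n}))$ under the same map. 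If (ii) holds then the two state spaces coincide (the two tensor norms agreeing), so the two correlation sets coincide, giving (iii). For the converse, suppose (ii) fails for some $k,n$; then the quotient map $C^*(\Gamma_{k,n})\otimes_{\max}C^*(\Gamma_{k,n}) \to C^*(\Gamma_{k,n})\otimes_{\min}C^*(\Gamma_{k,n})$ is not injective, so there is a max-state that does not factor through the min-norm. The task is to promote this to a separation at the level of the finitely many matrix entries $e_a^x\otimes e_b^y$. This is exactly where the span $\cM = \mathrm{span}\{e_a^x\otimes e_b^y\}$ from the proof of Proposition~\ref{prop:dense} does the work: one shows that the operator system generated by these elements already ``sees'' the difference between the two tensor norms — because the $e_a^x$ generate $C^*(\Gamma_{k,n})$ as a $C^*$-algebra and a PVM-valued presentation recovers everything — so that $C_{qc}(n,k)\setminus C_{qa}(n,k) \neq \emptyset$, contradicting (iii). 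The cleanest route is to invoke Theorem~\ref{thm:Fritz} in the contrapositive: the two correlation sets determine, and are determined by, the restrictions of the two state spaces to $\cM$, and these restrictions are equal for all $k,n$ precisely when the min and max norms agree on $C^*(\Gamma_{k,n})\otimes C^*(\Gamma_{k,n})$ for all $k,n$.

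The main obstacle is the converse direction of (ii) $\Leftrightarrow$ (iii): getting from an abstract failure of min $=$ max on the tensor square down to a concrete pair of correlation matrices that separate $C_{qa}$ from $C_{qc}$. One must argue that no information is lost in passing from the full tensor product $C^*$-algebra to the finite-dimensional operator subspace $\cM$ spanned by the generating products $e_a^x\otimes e_b^y$. This rests on the universal property of $\Gamma_{k,n}$ (every PVM-indexed family of the right size arises from a representation) combined with the fact that a state on a $C^*$-algebra is determined by its values on a generating operator system together with the Cauchy--Schwarz/positivity constraints — precisely the content packaged into Theorem~\ref{thm:Fritz}. Once that reduction is granted, the equivalence is a formal consequence of the Hahn--Banach/Krein--Milman picture already used in Propositions~\ref{prop:dense} and the proof of Theorem~\ref{thm:Tsirelson}. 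The remaining bookkeeping — the LLP of free products, the $\mathbb F_2$-embedding, the harmless status of the $(2,2)$ case — is routine and can be dispatched with citations to \cite{Exel-Loring:RFD} and standard facts about free products of groups.
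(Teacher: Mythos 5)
Your overall architecture is the same as the paper's: (i) $\Leftrightarrow$ (ii) via Theorem~\ref{thm:Kir2} together with Ozawa's remark that $\mathbb{F}_\infty$ may be replaced by any group containing $\mathbb{F}_2$ whose full group \Cs{} has the LLP, and (ii) $\Leftrightarrow$ (iii) via the description of $C_{qa}$ and $C_{qc}$ in Theorem~\ref{thm:Fritz}. Your treatment of (i) $\Leftrightarrow$ (ii) (LLP of unital free products, the Kurosh-type embedding of $\mathbb{F}_2$, nuclearity of $C^*(\Z/2 * \Z/2)$ disposing of the $(2,2)$ case) is correct and in fact more detailed than the paper, which simply quotes Ozawa's remark. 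The implication (ii) $\Rightarrow$ (iii) is also fine: equality of the two tensor norms forces equality of the two state spaces, hence of their images under $\rho \mapsto [\rho(e_a^x \otimes e_b^y)]$.

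The gap is in (iii) $\Rightarrow$ (ii). You assert that ``a state on a $C^*$-algebra is determined by its values on a generating operator system together with the Cauchy--Schwarz/positivity constraints'' and that this is ``precisely the content packaged into Theorem~\ref{thm:Fritz}.'' Neither claim holds. A state is determined by its values on the dense $*$-subalgebra generated by the $e_a^x \otimes e_b^y$, i.e.\ on \emph{all} words in these generators, not on the finite-dimensional span $\cM$ of the degree-$(1,1)$ monomials; and Theorem~\ref{thm:Fritz} only identifies $C_{qa}(n,k)$ and $C_{qc}(n,k)$ as the images of the two state spaces restricted to $\cM$ --- it says nothing about whether equality of these restricted images (for a fixed, or even for all, $(n,k)$) forces the min and max norms to agree on the full algebraic tensor product. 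A priori two distinct $C^*$-norms could induce the same set of restrictions of states to the small subspace $\cM$. Closing this gap is exactly the nontrivial content of the implication: one needs an encoding argument that converts higher-order moments of the generators of $C^*(\Gamma_{k,n}) \otimes C^*(\Gamma_{k,n})$ into first-order moments for larger parameters $(k',n')$ (equivalently, a Kirchberg--Pisier type linearization reducing min $=$ max to the span of the unitary generators), which is what Ozawa actually proves in \cite{Ozawa:CEP}; see also \cite{Fritz:Tsirelson} and \cite{JNPP-GSW:Tsirelson}. The paper does not attempt this either --- it explicitly cites Ozawa for (iii) $\Rightarrow$ (ii) --- so your proposal is not wrong in strategy, but as written it presents the hard step as a formal consequence of Theorem~\ref{thm:Fritz}, which it is not.
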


\noindent The implication (iii) $\Rightarrow$ (ii) was proved by Ozawa in \cite{Ozawa:CEP}.

The breakthrough in the study of Tsirelson's conjecture is due to Slofstra, \cite{Slofstra:Tsirelson}, who proved that $C_{qs}(n,k)$ is not closed, for $n,k$ sufficiently large. This was further refined by Dykema--Paulsen--Prakash, \cite{DykPauPra:non-closure}, who established the result for all $n \ge 5$ and $k \ge 2$. A streamlined and more direct proof was given soon thereafter in \cite{MusRor:Infdim} using that $\cD_\fin(n)$ is not closed, for $n \ge 5$. Both papers \cite{DykPauPra:non-closure} and \cite{MusRor:Infdim} prove, in fact, that the synchronous version $C_q^s(n,2)$ of the set $C_q(n,2)=C^{\mathrm{fin}}_{qs}(n,2)$ is not closed, when $n \ge 5$. Recall that a correlation $[p(a,b \, | \, x,y)]_{a,b;x,y}$ is synchronous if, for every $1 \le x \le n$, $p(a,b \, | \, x,x)=0$ whenever 
$a \ne b$. It was shown by Kim--Paulsen--Schafhauser, \cite[Theorem 3.10]{KPS:sync} that $C_q^s(n,2) = C_{qs}^s(n,2)$, for all $n \ge 2$. (That is, as stated in \cite{KPS:sync}, any synchronous correlation that can be obtained using a tensor product of possibly infinite-dimensional Hilbert spaces, has a representation using only finite-dimensional spaces.) We infer that $C_{qs}(n,2)$ is not closed, for $n \ge 5$, as the synchronous correlations are closed therein.

In \cite[Corollary 3.8]{KPS:sync}, it is further shown that CEP has a positive answer if and only if the \emph{synchronous} Tsirelson conjecture is true, that is,
the closure of $C_{qs}^s(n,k)$ is equal to $C_{qc}^s(n,k)$, for all $n,k\ge 2$. 

In the paper MIP$^*$=RE, \cite{JNVWY:MIP*=RE}, by proving that the complexity class MIP$^*$ contains an undecidable language, the authors Ji, Natarajan, Vidick, Wright and Yuen conclude that the synchronous Tsirelson conjecture is false, thus yielding a negative answer to the Connes Embedding Problem.

\providecommand{\bysame}{\leavevmode\hbox to3em{\hrulefill}\thinspace}
\providecommand{\MR}{\relax\ifhmode\unskip\space\fi MR }
\providecommand{\MRhref}[2]{%
  \href{http://www.ams.org/mathscinet-getitem?mr=#1}{#2}
}
\providecommand{\href}[2]{#2}

\bigskip \noindent
Magdalena Musat\\
Department of Mathematical Sciences \\
University of Copenhagen\\ 
Universitetsparken 5, DK-2100, Copenhagen \O \\
Denmark

\bigskip \noindent Email: musat@math.ku.dk



\begin{thebibliography}{23}

\bibitem{A-D:factorizable}
C.~Anantharaman-Delaroche, \emph{{On ergodic theorems for free group actions on
  noncommutative spaces}}, Probab. Theory Related Fields \textbf{135} (2006),
  no.~4, 520--546.

\bibitem{Bell-1964}
J.~S. Bell, \emph{On the einstein podolsky rosen paradox}, Physics \textbf{3}
  (1964), no.~1, 195--200.

\bibitem{BZ:Geometry}
I.~Bengtsson and K.~\.Zyczkowski, \emph{Geometry of quantum states}, second
  ed., Cambridge University Press, Cambridge, 2017, An introduction to quantum
  entanglement.

\bibitem{Choi:CP}
M.-D. Choi, \emph{Completely positive linear maps on complex matrices}, Linear
  Algebra Appl. \textbf{10} (1975), 285--290.

\bibitem{Con:class}
A.~Connes, \emph{{Classification of injective factors}}, Ann. of Math.
  \textbf{104} (1976), 73--115.

\bibitem{DykJus:moments}
K.~Dykema and K.~Juschenko, \emph{Matrices of unitary moments}, Math. Scand.
  \textbf{109} (2011), no.~2, 225--239.

\bibitem{DykPauPra:non-closure}
K.~Dykema, V.~I. Paulsen, and J.~Prakash, \emph{Non-closure of the set of
  quantum correlations via graphs}, Comm. Math. Phys. \textbf{365} (2019),
  no.~3, 1125--1142.

\bibitem{Exel-Loring:RFD}
R.~Exel and T.~A. Loring, \emph{Finite-dimensional representations of free
  product {$C^*$}-algebras}, Internat. J. Math. \textbf{3} (1992), no.~4,
  469--476.

\bibitem{Fritz:Tsirelson}
T.~Fritz, \emph{Tsirelson's problem and {K}irchberg's conjecture}, Rev. Math.
  Phys. \textbf{24} (2012), no.~5, 1250012, 67.

\bibitem{HaaMusat:CMP-2011}
U.~Haagerup and M.~Musat, \emph{{Factorization and dilation problems for
  completely positive maps on von {N}eumann algebras}}, Comm. Math. Phys.
  \textbf{303} (2011), no.~2, 555--594.

\bibitem{HaaMusat:CMP-2015}
\bysame
 \emph{{An asymptotic property of factorizable completely positive maps
  and the {C}onnes embedding problem}}, Comm. Math. Phys. \textbf{338} (2015),
  no.~2, 721--752.

\bibitem{HMR:extreme}
U.~Haagerup, M.~Musat, and M.~B. Ruskai, \emph{Extreme points and
  factorizability for new classes of unital quantum channels}, Ann. Henri
  Poincar\'e \textbf{22} (2021), no.~10, 3455--3496.

\bibitem{JNVWY:MIP*=RE}
Z.~Ji, A.~Natarajan, T.~Vidick, J.~Wright, and H.~Yuen, \emph{{MIP$^*$=RE}},
  arXiv:2001.04383, 2020, 2022.

\bibitem{JNPP-GSW:Tsirelson}
M.~Junge, M.~Navascues, C.~Palazuelos, D.~Perez-Garcia, V.~B. Scholz, and R.~F.
  Werner, \emph{Connes embedding problem and {T}sirelson's problem}, J. Math.
  Phys. \textbf{52} (2011), no.~1, 012102, 12.

\bibitem{Junge-Pisier:B(H)}
M.~Junge and G.~Pisier, \emph{Bilinear forms on exact operator spaces and
  {$B(H)\otimes B(H)$}}, Geom. Funct. Anal. \textbf{5} (1995), no.~2, 329--363.

\bibitem{KadRin:VolI}
R.~V. Kadison and J.~R. Ringrose, \emph{Fundamentals of the theory of operator
  algebras. {V}ol. {I}}, Graduate Studies in Mathematics, vol.~15, American
  Mathematical Society, Providence, RI, 1997, Elementary theory, Reprint of the
  1983 original. 

\bibitem{KPS:sync}
S-J. Kim, V.~Paulsen, and C.~Schafhauser, \emph{A synchronous game for binary
  constraint systems}, J. Math. Phys. \textbf{59} (2018), no.~3, 032201, 17.

\bibitem{Kir:CEP}
E.~Kirchberg, \emph{On nonsemisplit extensions, tensor products and exactness
  of group {$C^*$}-algebras}, Invent. Math. \textbf{112} (1993), no.~3,
  449--489.

\bibitem{KRS:projections}
S.~A. Kruglyak, V.~I. Rabanovich, and Yu. S.~Samo\u ilenko, \emph{On sums of
  projections}, Funktsional. Anal. i Prilozhen. \textbf{36} (2002), no.~3,
  20--35, 96.

\bibitem{Kuem:2x2}
B.~K\"ummerer, \emph{Markov dilations on the {$2\times 2$} matrices}, Operator
  algebras and their connections with topology and ergodic theory ({B}u\c
  steni, 1983), Lecture Notes in Math., vol. 1132, Springer, Berlin, 1985,
  pp.~312--323.

\bibitem{Kuem:Hab}
\bysame
\emph{{Construction and structure of Markov dilations on
  $W^*$-algebras}}, Habilitationsschrift, T\"ubingen, 1986.

\bibitem{Kuem-Maassen}
B.~K\"ummerer and H.~Maassen, \emph{The essentially commutative dilations of
  dynamical semigroups on {$M_n$}}, Comm. Math. Phys. \textbf{109} (1987),
  no.~1, 1--22.

\bibitem{Landau-Streater}
L.~J. Landau and R.~F. Streater, \emph{On {B}irkhoff's theorem for doubly
  stochastic completely positive maps of matrix algebras}, Linear Algebra Appl.
  \textbf{193} (1993), 107--127.

\bibitem{Mori:shape}
M.~Mori, \emph{On the shape of correlation matrices for unitaries}, Math.
  Scand. \textbf{130} (2024), no.~2, 359--363.

\bibitem{Muller-Hermes-Perry:4-noisy}
A.~M\"uller-Hermes and C.~Perry, \emph{{All unital qubit channels are $4$-noisy
  operations}}, Lett. Math. Phys. (2018), 1--9.

\bibitem{MusRor:Infdim}
M.~Musat and M.~R{\o}rdam, \emph{Non-closure of quantum correlation matrices
  and factorizable channels that require infinite dimensional ancilla}, Comm.
  Math. Phys. \textbf{375} (2020), no.~3, 1761--1776, With an appendix by
  Narutaka Ozawa.

\bibitem{Ozawa:CEP}
N.~Ozawa, \emph{About the {C}onnes embedding conjecture: algebraic approaches},
  Jpn. J. Math. \textbf{8} (2013), no.~1, 147--183.

\bibitem{Ozawa-Pisier:B(H)}
N.~Ozawa and G.~Pisier, \emph{A continuum of {$\rm C^*$}-norms on
  {$\Bbb{B}(H)\otimes\Bbb{B}(H)$} and related tensor products}, Glasg. Math. J.
  \textbf{58} (2016), no.~2, 433--443.

\bibitem{Pisier:Grothendieck}
G.~Pisier, \emph{Grothendieck's theorem, past and present}, Bull. Amer. Math.
  Soc. (N.S.) \textbf{49} (2012), no.~2, 237--323.

\bibitem{Ricard:Markov}
E.~Ricard, \emph{A {M}arkov dilation for self-adjoint {S}chur multipliers},
  Proc. Amer. Math. Soc. \textbf{136} (2008), no.~12, 4365--4372.

\bibitem{Stormer:Jordan}
E.~St\o rmer, \emph{On the {J}ordan structure of {$C\sp{\ast} $}-algebras},
  Trans. Amer. Math. Soc. \textbf{120} (1965), 438--447.

\bibitem{Slofstra:Tsirelson}
W.~Slofstra, \emph{Tsirelson's problem and an embedding theorem for groups
  arising from non-local games}, J. Amer. Math. Soc. \textbf{33} (2020), no.~1,
  1--56.

\bibitem{SmoVerWin:entagle}
J.A. Smolin, F.~Verstraete, and A.~Winter, \emph{{Entanglement of assistance
  and multipartite state distillations}}, Phys. Rev. A \textbf{72} (2005),
  no.~5, 052317.
  

\end{thebibliography}
\end{document}